\def\co{\colon\thinspace}
\newcommand{\sig}{\sigma}
\newcommand{\strdiv}{\parallel}
\newcommand{\divides}{\mid}
\newcommand{\FF}{$F_5$ }
\DeclareMathOperator{\Mon}{Mon}
\DeclareMathOperator{\Rad}{Rad}
\DeclareMathOperator{\LM}{lm}
\DeclareMathOperator{\LT}{lt}
\DeclareMathOperator{\LC}{lc}
\DeclareMathOperator{\tail}{tail}
\DeclareMathOperator{\stdmon}{stdMon}
\DeclareMathOperator{\NF}{NF}
\DeclareMathOperator{\interred}{interred}
\DeclareMathOperator{\poly}{poly}
\newtheorem{rk}{Remark}
\newtheorem{thm}{Theorem}
\newtheorem{lem}{Lemma}
\newtheorem{cor}{Corollary}
\theoremstyle{definition}
\newtheorem{defn}{Definition}
\theoremstyle{remark}
\begin{document}

\title[A non-commutative $F_5$ algorithm and Loewy layers]{A non-commutative $F_5$ algorithm with an application to the
  computation of Loewy layers}
\author{Simon A. King}
\address{Department of Mathematics and Computer Science\\Institute of Mathematics\\ Friedrich Schiller University Jena}
\email{simon.king@uni-jena.de}
\thanks{DFG project GR 1585/6--1}
\begin{abstract}
\noindent
We provide a non-commutative version of the $F_5$ algorithm, namely for
right-modules over path algebra quotients. It terminates, if the path algebra
quotient is a basic algebra. In addition, we use the $F_5$
algorithm in negative degree monomial orderings to compute Loewy layers.
\end{abstract}
\maketitle

\tableofcontents

\section{Introduction}

In this paper, we present a non-commutative version of the \FF algorithm, in
the setting of finitely generated sub-modules of free right-modules of finite
rank over path algebra quotients. It terminates, if the path algebra quotient
is a basic algebra. The \FF algorithm is usually thought of as an algorithm
for the computation of Gr\"obner bases. It improves efficiency of the
computation by discarding many ``useless'' critical pairs, namely critical
pairs whose S-polynomials would reduce to zero, in Buchberger's algorithm.  In
addition, we show that the \FF algorithm immediately yields the Loewy layers
of a right-module over a basic algebra, provided that a negative degree
monomial ordering is used.

A \emph{basic algebra} is a finite dimensional quotient of a path algebra, by
relations that are at least quadratic. Basic algebras are useful in the study
of modular group algebras of finite groups: If $p$ is a prime dividing the
order of a finite group $G$, then there is some finite extension field $K$
over $\mathbb F_p$, so that $KG$ is Morita equivalent to a basic algebra
$\mathcal A$ over some finite quiver, whose connected components correspond to
the $p$-blocks of $G$; see~\cite{Erdmann:Blocks}.
In particular, the Ext algebra of $G$ with coefficients in $K$ can be computed
by constructing a minimal projective resolution of the simple module
associated to each vertex of the quiver corresponding to $\mathcal A$.

If an initial segment of a minimal projective resolution is given, then one
needs to compute minimal generating sets of kernels of homomorphisms of free
right-$\mathcal A$ modules, in order to compute the next term of the
resolution.
Three main computational approaches have been considered: Gr\"obner bases with
respect to well-orderings in path algebras, linear algebra, and standard bases
with respect to negative degree orderings in path algebras.

It is well known~\cite{Singular:Book} that kernels of module homomorphisms can
be computed by means of Gr\"obner bases. D.\@ Farkas, C.\@ Feustel, and E.\@
Green~\cite{FarkasFeustelGreen:Synergy} have provided a Gr\"obner basis theory
for path algebra quotients. In~\cite{FeustelGreen:ProjRes}, it was shown
how to obtain minimal generating sets from the Gr\"obner bases, in the case of
basic algebras. See also~\cite{GreenSolbergZacharia:MinimalProj}.

However, since $\mathcal A$ is of finite dimension as a $K$-vector space, the
problem can also be solved by linear algebra. This approach was exploited by J.\@
Carlson~\cite{Carlson:d64gps} in the first complete computation of the modular
cohomology rings of all groups of order $64$.

D.\@ Green~\cite{Green:Habil} suggested to make use of \emph{negative}
monomial orderings; in this setting, one would talk about \emph{standard
  bases} rather than Gr\"obner bases. His standard basis theory for sub-modules
$M$ of a free right-$\mathcal A$ module also takes into account containment in
the radical of $M$. The standard bases thus obtained are called
\emph{heady}. Green shows that a minimal generating set of $M$ can easily be
read off of a heady standard basis of $M$.
In examples, standard bases for group algebras are often much smaller than
Gr\"obner bases, as was pointed out in~\cite{Green:StandardVsGroebner}.

D.\@ Green implemented his algorithms in C and has shown in many practical
computations that heady standard bases often perform better than linear
algebra. The author of this paper used Green's programs in a modular group
cohomology package~\cite{SimonsProg} for the open source computer algebra
system Sage~\cite{Sage}, which resulted in the first complete computation of
the modular cohomology rings of all groups of order $128$~\cite{128gps} and of
several bigger groups, for different primes, including the mod-$2$ cohomology
of the third Conway
group~\cite{KingGreenEllis:Co3}.

Since we use negative degree orderings, our approach is in the spirit
of~\cite{Green:Habil}. But~\cite{Green:Habil} focuses on modular group rings
of groups of prime power order, which corresponds to basic algebras whose
quivers have a single vertex. The first purpose of this paper is to lift that
restriction and consider general basic algebras.

The algorithm for the computation of heady standard bases in~\cite{Green:Habil}
is of Buchberger type. In particular, performance suffers when too many
useless critical pairs are considered, \emph{i.e.}, critical pairs whose
S-polynomials reduce to zero. The second purpose of this paper is to show that
one can use an \FF algorithm~\cite{Faugere:F5} for right-modules over basic
algebras. With the \FF algorithm, the number of useless critical pairs can be
drastically reduced.

In fact, the \FF algorithm computes a generalisation of heady standard
bases, which we call \emph{signed standard bases}. The third purpose
of this paper is to show: When a signed standard basis of a sub-module
$M$ of a free right-$\mathcal A$ module is known with respect to a
negative degree monomial ordering, then one can not only read off of
it a minimal generating set of $M$, but one can read off a $K$-vector
space basis for each \emph{Loewy layer} of $M$. So, the \FF algorithm
is not only more efficient than the Buchberger style algorithm
from~\cite{Green:Habil}, but it yields more information. Loewy layers
are defined as follows.

\begin{defn}[\cite{Benson:I}]
  The \emph{radical} $\Rad(M)$ of $M$ is the intersection of all the
  maximal submodules of $M$.
  Define $\Rad^0(M) = M$ and $$\Rad^d(M) =
  \Rad\left(\Rad^{d-1}(M)\right)$$ for $d=1,2,...$. The $d$-th \emph{Loewy layer} is
  $\Rad^{d-1}(M)/\Rad^d(M)$.
\end{defn}
In particular, the first Loewy layer $M/\Rad(M)$ is the head of $M$, and a
minimal generating set of $M$ as a right-$\mathcal A$ module corresponds to
a $K$-vector space basis of $M/\Rad(M)$.

This paper is organised as follows. In Section~\ref{sec:orderings}, we recall
some notions from standard basis theory, adapted to path algebra quotients
(not necessarily finite dimensional).

In Section~\ref{sec:Module_orderings}, we show how Buchberger's algorithm
looks like for right-modules over path algebra quotients. The main difference
is that one replaces S-polynomials by so-called \emph{topplings}. Since our
focus is on the \FF algorithm, we skip most proofs in that section and refer
to~\cite{Green:Habil}. We conclude the section by discussing the chain
criterion in the context of topplings.

Section~\ref{sec:signature} is devoted to \emph{signed standard bases} of
right-modules over path algebra quotients. The main result in that section is
Theorem~\ref{thm:sigBuchberger}, providing a criterion for detection of signed
standard bases that combines Faug\`ere's \FF and rewritten criteria. The
criterion is used in Algorithm~\ref{alg:F5}. 

The \FF criterion helps to discard many critical pairs whose S-polynomials
would reduce to zero in the Buchberger algorithm. Nonetheless, in general,
zero reductions can not be totally avoided. But by an idea
from~\cite{ArriPerry:F5revised}, any occurring zero reduction can be used to
improve the criterion and thus helps to avoid some other zero reductions.

Algorithm~\ref{alg:F5} may not terminate for general path algebra
quotients, but \emph{if} it terminates, then it returns a signed
standard basis. We did not try to find the most general conditions for
termination (in particular we do not prove that termination is granted
for all noetherian path algebra quotients), since our work is
motivated by the study of basic algebras, for which termination of
Algorithm~\ref{alg:F5} is clear.

The final Section~\ref{sec:Loewy} shows how signed standard bases for negative
degree monomial orderings can be used
to compute Loewy layers.


\section{Monomial orderings for quotients of path algebras}
\label{sec:orderings}

Let $\mathcal P$ be a path algebra over a field $K$, given by a
finite quiver, $Q$. Let $x_1,...,x_n$ be generators of $\mathcal P$
corresponding to the arrows $\alpha_1,...,\alpha_n$ of $Q$. To each
vertex $v$ of $Q$ corresponds an idempotent $1_v\in \mathcal P$, such
that
$$1_v\cdot x_i=
\begin{cases}
  x_i &  \text{if $\alpha_i$ starts at $v$}\\
  0   &  \text{otherwise}
\end{cases}
$$
and
$$x_j\cdot 1_v=
\begin{cases}
  x_j &  \text{if $\alpha_j$ ends at $v$}\\
  0   &  \text{otherwise}
\end{cases}
$$

Multiplication in $\mathcal P$ corresponds to concatenation of paths
in $Q$. In particular, the directed path in $Q$ are in one-to-one
correspondence with the power products of the generators
$x_1,...,x_n$, to which we refer to as the \emph{monomials} of
$\mathcal P$, the length of the path defining the degree of the
monomial. If a monomial $b$ of $\mathcal P$ corresponds to a path in
$Q$ with endpoint $v$ and a monomial $c$ of $\mathcal P$ corresponds
to a path in $Q$ with startpoint $w\not=v$, then we define $b\cdot
c=0$.  The trivial path consisting of vertex $v$ corresponds to the
monomial $1_v$. The set $\Mon(\mathcal P)$ of monomials of $\mathcal
P$ forms a basis of $\mathcal P$ as a $K$-vector space.

Let $\mathcal A$ be a quotient algebra of $\mathcal P$. In this paper, we
focus on finitely generated sub-modules $M$ of a free right-$\mathcal A$
module $\mathcal A^r$ of rank $r$. In our main application, $\mathcal A$ is a
\emph{basic algebra}, hence finite dimensional over $K$. However, for the
moment we make no restrictions.  For the following notions related with the
theory of standard bases, we adapt the notions from~\cite{Singular:Book}.

\begin{defn}
  A \emph{monomial ordering} of $\mathcal P$ is a total ordering $>$
  on $\Mon(\mathcal P)$, such that
  $$
    b_1 > b_2 \Longrightarrow b_1\cdot b> b_2\cdot b \text{ and } b'\cdot b_1>b'\cdot b_2
  $$
  for all $b_1,b_2,b,b'\in \Mon(\mathcal P)$ such that
  $b_1\cdot b$, $b_2\cdot b$, $b'\cdot b_1$ and $b'\cdot b_2$ are
  all non-zero.
\end{defn}

\begin{defn} \mbox{}
  \begin{itemize}
  \item A monomial ordering of $\mathcal P$ is called \emph{positive},
    or \emph{global}, if $b>1_v$ for any vertex $v$ of $Q$ and any
    monomial $b\in \Mon(\mathcal P)$ with $\deg(b)>0$.
  \item A monomial ordering of $\mathcal P$ is called \emph{negative},
    or \emph{local}, if $b<1_v$ for any vertex $v$ of $Q$ and any
    monomial $b\in \Mon(\mathcal P)$ with $\deg(b)>0$.
  \item A monomial ordering of $\mathcal P$ is called a \emph{positive
      degree ordering} (resp.\@ a \emph{negative degree ordering}) if
    $\deg(b_1)>\deg(b_2)$ implies $b_1>b_2$ (resp.\@ $b_2>b_1$), for
    all $b_1,b_2\in\Mon(\mathcal P)$.
  \end{itemize}
\end{defn}

Given a monomial ordering $>$, each element $p\in\mathcal P$ can be
uniquely written as
$$
p = \alpha_1b_1 + ... + \alpha_kb_k
$$
with $\alpha_1,...,\alpha_k\in K\setminus\{0\}$ and monomials
$b_1>b_2>...>b_k$. If $p\not=0$, we define
\begin{itemize}
\item $\LM(p) = b_1$, the \emph{leading monomial} of $p$,
\item $\LC(p) = \alpha_1$, the \emph{leading coefficient} of $p$,
\item $\LT(p) = \alpha_1b_1$, the \emph{leading term} or \emph{head} of $p$,
\item $\tail(p) = p-\LT(p)$, the \emph{tail} of $p$.
\end{itemize}

Since we are focusing on right modules, our notion of divisibility of
monomials prefers one side as well:
\begin{defn}
  Let $b,c\in\Mon(\mathcal P)$ be monomials. We say that $b$ divides
  $c$ (denoted $b\divides c$), if there is a monomial $b'\in\Mon(\mathcal P)$
  such that $b\cdot b'=c$.
\end{defn}

\begin{defn}
  Let $\psi\co \mathcal P \to \mathcal A$ be the quotient map. Then
  $\stdmon_{\mathcal A}(\mathcal P)\subset \Mon(\mathcal P)$ is formed by all
  monomials that are not leading monomials of elements of $\ker(\psi)$.
\end{defn}
We assume in this paper that the standard monomials are known. Note that the
standard monomials could alternatively be obtained by linear algebra or from a two-sided
standard basis of $\ker(\psi)$, if $\mathcal A$ is finite dimensional. One
easily sees that $\psi$ is injective on $\stdmon_{\mathcal A}(\mathcal P)$,
and that $\mathcal B_>(\mathcal A) = \psi\left(\stdmon_{\mathcal
  A}(\mathcal P)\right)$ is a basis of $\mathcal A$ as a $K$-vector space. We
call it the \emph{preferred basis}, following~\cite{Green:Habil}, and we call
its elements the \emph{monomials} of $\mathcal A$.

\begin{defn}
  We define the \emph{lift} $\lambda(b)$ of a monomial $b\in \mathcal
  B_>(\mathcal A)$ of $\mathcal A$ as the unique element of $\stdmon_{\mathcal
    A}(\mathcal P)$ with $\psi\left(\lambda(b)\right)=b$.
\end{defn}

We can now define divisibility of elements of the preferred basis and
an ordering on the preferred basis as follows.
\begin{defn}
  Let $b,b'\in \mathcal B_>(\mathcal A)$. We say that $b$
  \emph{strictly divides} $b'$ (or $b\strdiv b'$) if and only if
  $\lambda(b)\divides \lambda(b')$, and define $b>b'$ if and only
  if $\lambda(b)> \lambda(b')$. We define $\deg(b)= \deg\left(\lambda(b)\right)$.
\end{defn}
Note that the existence of some $s\in \mathcal A$ or even $s \in
\mathcal B_>(\mathcal A)$ with $b\cdot s = b'$ does not necessarily
imply that $b\strdiv b'$. However, strict divisibility is a transitive
relation.

We call the induced ordering on the preferred basis a \emph{monomial
  ordering on $\mathcal A$}, inheriting the properties ``positive'',
``negative'' or ``degree ordering'' from the monomial ordering on
$\mathcal P$.
\begin{defn}
  A monomial ordering on $\mathcal A$ is \emph{admissible}, if there
  is no infinite strictly decreasing sequence of monomials of
  $\mathcal A$.
\end{defn}

Since any element of $f\in \mathcal A$ can be uniquely written as
$$
f = \alpha_1b_1 + ... + \alpha_kb_k
$$
with $\alpha_1,...,\alpha_k\in K\setminus\{0\}$ and $b_1,...,b_k\in \mathcal
B_>(\mathcal A)$ with $b_1>b_2>...>b_k$, we can define the leading
monomial $\LM(f)=b_1$, the leading coefficient $\LC(f)=\alpha_1$, the
leading term $\LT(f)=\alpha_1b_1$ and the tail $\tail(f)=f-\LT(f)$ of $f$,
provided $f\not=0$.

With leading monomials being defined in $\mathcal A$, we can define
another notion of divisibility on $\mathcal B_>(\mathcal A)$, that is
weaker than strict divisibility.
\begin{defn}
  Let $b,b'\in \mathcal B_>(\mathcal A)$. We say that $b$ divides $b'$
  (or $b\divides b'$) if and only if there is some $c\in \mathcal
  B_>(\mathcal A)$ such that $\LM(b\cdot c)=b'$.
\end{defn}
Obviously $b\strdiv b'$ implies $b\divides b'$. Note that $b\divides
b'$ does not necessarily mean that there is some $f\in \mathcal A$
with $b\cdot f=b'$.

Divisibility and strict divisibility are interrelated by the following
notion, that we adapt from~\cite{Green:Habil}.
\begin{defn}\mbox{}
  Let $b \in \mathcal B_>(\mathcal A)$ and $t\in \mathcal B_>(\mathcal A)\cup\{0\}$.
  \begin{enumerate}
  \item If there is a $c\in\mathcal B_>(\mathcal A)$ such that either
    $\LM(b\cdot c)=t\not=0$ and $b\not\strdiv t$, or
    $b\cdot c=0=t$, then $t$ is called a \emph{toppling} of $b$
    with \emph{cofactor} $c$.
  \item If $c\in\mathcal B_>(\mathcal A)$ is not cofactor of a
    toppling of $b$ then we call $c$ a
    \emph{small cofactor} of $b$.
  \item Let $t$ be a toppling of $b$ with a cofactor $c$. Assume that
    all $c'\in \mathcal B_>(\mathcal A)$ with $c'\strdiv c$ and
    $c'\not=c$ are small cofactors of $b$. Then $t$ is called a
    \emph{minimal} toppling of $b$.
  \end{enumerate}
\end{defn}
We make two easy observations:

\begin{rk}
  Let $b,c,c'\in \mathcal B_>(\mathcal A)$. Then $c$ is a small
  cofactor of $b$ and $c'$ is a small cofactor of $b\cdot c$ if and
  only if $c\cdot c'$ is a small cofactor of $b$.
\end{rk}

\begin{rk}\label{rk:decompose}
  For any $b,b'\in \mathcal B_>(\mathcal A)$ with $b\divides b'$, there is a
  finite sequence $b=b_0,b_1,b_2,...,b_m\in \mathcal B_>(\mathcal A)$
  such that $b_i$ is a minimal toppling of $b_{i-1}$, for all
  $i=1,...,m$, and $b_m\strdiv b'$.
\end{rk}

\begin{lem}\label{lem:SmallCofactorA}
  Let $a,a',b\in \mathcal B_>(\mathcal A)$ with $a>a'$, so that $b$ is a small
  cofactor of $a$, and $a'\cdot b\not=0$. Then $\LM(a\cdot b)>\LM(a'\cdot b)$.
  In particular, if $f\in \mathcal A$ and $b\in \mathcal B_>(\mathcal A)$ is a
  small cofactor of $\LM(f)$, then $\LM(f\cdot b) = \LM(f)\cdot b$.
\end{lem}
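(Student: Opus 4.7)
The plan is to lift the claim through the map $\lambda$ into the path algebra $\mathcal{P}$, where the monomial ordering is compatible with multiplication. First I unpack the hypothesis: since $b$ is a small cofactor of $a$, no toppling of $a$ has cofactor $b$, so we can rule out both $a\cdot b=0$ and the possibility that $a\not\strdiv\LM(a\cdot b)$. Hence $a\cdot b\neq 0$ and $a\strdiv\LM(a\cdot b)$, which means $\lambda(\LM(a\cdot b)) = \lambda(a)\cdot d$ for some $d\in\Mon(\mathcal{P})$.

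The key intermediate step is to show $\lambda(\LM(a\cdot b)) = \lambda(a)\cdot\lambda(b)$, i.e.\ that $\lambda(a)\cdot\lambda(b)$ is itself already a standard monomial. I expand $a\cdot b = \sum_i\alpha_i b_i$ in the preferred basis with $b_1=\LM(a\cdot b)>b_2>\cdots$; then $\lambda(a)\lambda(b) - \sum_i\alpha_i\lambda(b_i)$ lies in $\ker(\psi)$. Since leading monomials of nonzero kernel elements are never standard, no $\lambda(b_i)$ can exceed $\lambda(a)\lambda(b)$ in the ordering on $\mathcal{P}$, and in particular $\lambda(b_1)\leq\lambda(a)\lambda(b)$. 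Combined with $\lambda(b_1) = \lambda(a)\cdot d$, monotonicity of the ordering gives $d\leq\lambda(b)$. The smallness of the cofactor then forces $d=\lambda(b)$: a strict inequality would describe a reduction of $\lambda(a)\lambda(b)$ to a smaller standard monomial that still carries $\lambda(a)$ as a left divisor, which I plan to interpret as a toppling of $a$ witnessed by a cofactor strictly dividing $b$, contradicting that $b$ itself is a small cofactor.

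Once the key equality is in hand, the main inequality is immediate: the same kernel-element argument applied to $a'$ yields $\lambda(\LM(a'\cdot b))\leq\lambda(a')\lambda(b)$, while monotonicity of $>$ on $\mathcal{P}$ applied to $\lambda(a)>\lambda(a')$ (both products nonzero) gives $\lambda(a)\lambda(b)>\lambda(a')\lambda(b)$. Chaining these, $\lambda(\LM(a\cdot b))>\lambda(\LM(a'\cdot b))$, whence $\LM(a\cdot b)>\LM(a'\cdot b)$. For the ``in particular'' clause, I write $f=\alpha_1\LM(f)+\tail(f)$; the key equality makes $\LM(f)\cdot b$ a single preferred-basis element, and the main inequality applied to each monomial of $\tail(f)$ shows that $\LM(f)\cdot b$ strictly dominates every term coming from $\tail(f)\cdot b$, so $\LM(f\cdot b)=\LM(f)\cdot b$. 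The main obstacle will be the step forcing $d=\lambda(b)$: translating a hypothetical strict inequality into a contradicting toppling requires careful use of the definitions of strict divisibility and small cofactor.
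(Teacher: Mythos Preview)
Your route is the paper's: lift to $\mathcal P$, show that $\lambda(a)\lambda(b)$ is a standard monomial so that $a\cdot b\in\mathcal B_>(\mathcal A)$ with lift $\lambda(a)\lambda(b)$, and then use the kernel argument to get $\lambda(\LM(a'\cdot b))\le\lambda(a')\lambda(b)<\lambda(a)\lambda(b)$. The paper does not argue the ``key intermediate step'' at all; it simply asserts ``since $b$ is a small cofactor of $a$, we have $\tilde a\cdot\tilde b\in\stdmon_{\mathcal A}(\mathcal P)$'' and proceeds. Everything downstream in your sketch (the kernel element, the case split on whether $\lambda(a')\lambda(b)$ is standard, the ``in particular'' clause) matches the paper's argument.

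The obstacle you flag is genuine, and the contradiction you sketch does not materialise. From $\lambda(b_1)=\lambda(a)\cdot d$ with $d<\lambda(b)$ you hope to manufacture a toppling of $a$ whose cofactor strictly divides $b$; but nothing forces $d\mid\lambda(b)$, and in any case the equality $\lambda(\LM(a\cdot b))=\lambda(a)\cdot d$ says exactly that $a\strdiv\LM(a\cdot b)$ --- which \emph{confirms} that $b$ is not a toppling cofactor rather than contradicting it. In fact the implication you are trying to prove fails under the definitions as written: over $K\langle x,y,z\rangle$ modulo the two-sided ideal generated by $xz-xy^{2}$ and all length-$4$ words, with a negative degree ordering and $x>y>z$ inside each degree, set $a=\psi(x)$ and $b=\psi(z)$; then $a\cdot b=\psi(xy^{2})$ and $x\mid xy^{2}$, so $b$ is a small cofactor of $a$ even though $\lambda(a)\lambda(b)=xz$ is not standard (and with $a'=\psi(y)$ one even gets $\LM(a'\cdot b)=\psi(yz)>\psi(xy^{2})=\LM(a\cdot b)$). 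The paper is tacitly using ``$\lambda(a)\lambda(b)\in\stdmon_{\mathcal A}(\mathcal P)$'' as the operative meaning of ``small cofactor''; under that reading both proofs go through immediately, but you cannot derive it from the toppling definition by the route you outline.
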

\begin{proof}
  Let $\tilde a=\lambda(a)$, $\tilde a'=\lambda(a')$ and $\tilde
  b=\lambda(b)$. By definition of the monomial ordering on $\mathcal A$, we
  have $\tilde a'<\tilde a$ and thus $\tilde a'\cdot \tilde b<\tilde a\cdot
  \tilde b$. Since $b$ is a small cofactor of a, we have $\tilde a\cdot \tilde
  b\in \stdmon_{\mathcal A}(\mathcal P)$.

  If $\tilde a'\cdot \tilde b\in \stdmon_{\mathcal A}(\mathcal P)$ then
  $\LM(a'\cdot b) = \psi(\tilde a')\cdot b = \psi(\tilde a'\cdot \tilde b)
  <\psi(\tilde a\cdot \tilde b)$.
  Otherwise, let $\psi(\tilde a'\cdot \tilde b) = \psi(\tilde a')\cdot b =
  \alpha_1b_1 + ... + \alpha_kb_k$ with $\alpha_1,...,\alpha_k\in
  K\setminus\{0\}$ and $b_1,...,b_k\in \mathcal B_>(\mathcal A)$ with
  $b_1=\LM\left(\psi(\tilde a'\cdot \tilde b)\right)$, and let $\tilde b_i =
  \lambda(b_i)$ for $i= 1,...,k$. We have $k>0$, since $a'\cdot b\not=0$ by
  hypothesis.

  Since $z = \tilde a'\cdot \tilde b - \left( \alpha_1\tilde b_1 + ... +
  \alpha_k\tilde b_k\right)\in \ker(\psi)$ and since the standard monomials
  $\tilde b_1,...,\tilde b_k$ are by definition not leading monomials of any
  element of $\ker(\psi)$, it follows $\tilde b_i<\LM(z) = \tilde a'\cdot
  \tilde b <\tilde a\cdot \tilde b$ for $i=1,...,k$. Hence, $b_1 = \LM\left(\psi(\tilde a'\cdot
  \tilde b)\right) = \LM(a'\cdot b) < \LM(\psi(\tilde a\cdot \tilde b)) =
  \LM(a\cdot b)$.

  For the last statement of the lemma, we already know that $\LM(f\cdot
  b)=\LM(\LM(f)\cdot b)$. There remains to observe that $\LM(f)\cdot
  b=\psi(\tilde a\cdot \tilde b)$, since $b$ is not cofactor of a toppling of
  $\LM(f)$. Hence, $\LM(\LM(f)\cdot b)=\LM(f)\cdot b$.
\end{proof}

\section{Monomial orderings for right modules quotients of path algebras}
\label{sec:Module_orderings}

We use the same notations as in the previous section and are now
studying the free right-$\mathcal A$ module $F=\mathcal A^r$ of rank
$r$. Let $\mathfrak v_1,...,\mathfrak v_r$ be free generators of $F$.

\begin{defn}
  Let a monomial ordering $>$ on $\mathcal A$ be given, so that the
  notion of leading monomial is defined in the quotient $\mathcal A$
  of $\mathcal P$.
  \begin{enumerate}
  \item The set of \emph{monomials} $\Mon(F)$ of $F$ is the set of all $\mathfrak v_i\cdot b$
    with  $i=1,...,r$ and $b\in \mathcal B_>(\mathcal A)$.
  \item A total ordering $>$ on $\Mon(F)$ is called a \emph{monomial
      ordering compatible with $>$ on $\mathcal A$}, if it satisfies
    \begin{enumerate}
    \item $b_1>b_2 \Longrightarrow \mathfrak v_i\cdot b_1>\mathfrak v_i\cdot b_2$
    \item $\mathfrak v_i\cdot b_1>\mathfrak v_j\cdot b_2
      \Longrightarrow \mathfrak v_i\cdot b_1\cdot b > \mathfrak
      v_j\cdot \LM(b_2\cdot b)$ or $b_2\cdot b=0$
    \end{enumerate}
    for all $i,j=1,...,r$ and all $b_1,b_2,b\in \stdmon(\mathcal A)$
    so that $b$ is a small cofactor of $b_1$.
  \end{enumerate}
\end{defn}

Since any $f\in F$ can be uniquely written as a linear combination of
monomials of $F$, we obtain the notions of \emph{leading monomial}
$\LM(f)$, \emph{leading coefficient} $\LC(f)$, \emph{leading term}
$\LT(f)$ and \emph{tail} $\tail (f)$ as we did for elements of
$\mathcal A$.

\begin{defn}
  Let $\mathfrak v_i\cdot b_1, \mathfrak v_j\cdot b_2\in \Mon(F)$. We
  say that $\mathfrak v_i\cdot b_1$ \emph{divides} (resp.\@
  \emph{strictly divides}) $\mathfrak v_j\cdot b_2$, denoted
  $\mathfrak v_i\cdot b_1\divides \mathfrak v_j\cdot b_2$ (resp.\@
  $\mathfrak v_i\cdot b_1\strdiv \mathfrak v_j\cdot b_2$), if and only
  if $i=j$ and $b_1\divides b_2$ (resp.\@ $i=j$ and $b_1\strdiv b_2$).
\end{defn}
\begin{lem}\label{lem:SmallCofactor}
  Let $f\in F$ with $\LM(f) = \mathfrak v_i\cdot b_f$, and let $b\in
  \mathcal B_>(\mathfrak A)$ be a small cofactor of $b_f$. Then
  $\LM(f\cdot b) = \LM(f)\cdot b$.
\end{lem}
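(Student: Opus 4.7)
The plan is to reduce the claim to Lemma~\ref{lem:SmallCofactorA} by decomposing $f$ into its preferred-basis terms and using the compatibility axiom (b) of the monomial ordering on $F$ to transport the small cofactor $b$ past each sub-leading term. Essentially, this lifts the single-component statement of Lemma~\ref{lem:SmallCofactorA} to the free module $F$.

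First I would write $f$ in canonical form as
$$ f = \alpha_1\, \mathfrak{v}_i \cdot b_f + \sum_{k=2}^{\ell} \alpha_k\, \mathfrak{v}_{i_k} \cdot c_k, $$
with $\mathfrak{v}_i \cdot b_f > \mathfrak{v}_{i_2} \cdot c_2 > \cdots > \mathfrak{v}_{i_\ell} \cdot c_\ell$ and $\alpha_k \in K \setminus \{0\}$. Since $b$ is a small cofactor of $b_f$, the last assertion of Lemma~\ref{lem:SmallCofactorA} (applied to $b_f$) gives $\LM(b_f \cdot b) = b_f \cdot b$, so $b_f \cdot b$ is a single element of $\mathcal{B}_>(\mathcal{A})$, and the leading term of $f$ contributes the single monomial $\alpha_1\, \mathfrak{v}_i \cdot (b_f \cdot b)$ to $f \cdot b$.

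Next, for each $k \geq 2$ I would treat the term $\alpha_k\, \mathfrak{v}_{i_k} \cdot c_k \cdot b$. If $c_k \cdot b = 0$ it contributes nothing. Otherwise, expanding $c_k \cdot b$ in $\mathcal{B}_>(\mathcal{A})$ and applying axiom (a) to each pair of summands yields
$$ \LM(\mathfrak{v}_{i_k} \cdot c_k \cdot b) = \mathfrak{v}_{i_k} \cdot \LM(c_k \cdot b). $$
Axiom (b), applied to $\mathfrak{v}_i \cdot b_f > \mathfrak{v}_{i_k} \cdot c_k$ with small cofactor $b$ of $b_f$, then gives
$$ \mathfrak{v}_i \cdot (b_f \cdot b) > \mathfrak{v}_{i_k} \cdot \LM(c_k \cdot b) = \LM(\mathfrak{v}_{i_k} \cdot c_k \cdot b), $$
so every monomial of $\mathfrak{v}_{i_k} \cdot c_k \cdot b$ is strictly smaller than $\mathfrak{v}_i \cdot (b_f \cdot b)$.

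Combining these observations, no cancellation can touch the single top contribution, and $\LM(f \cdot b) = \mathfrak{v}_i \cdot (b_f \cdot b) = \LM(f) \cdot b$. I do not foresee a real obstacle: the work is done by axiom (b), which is designed precisely to handle comparisons across distinct free generators under multiplication by small cofactors, while axiom (a) handles the expansion within a single component.
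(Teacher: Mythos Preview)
Your proof is correct and is essentially a detailed unpacking of the paper's one-line argument, which simply reads ``Lemma~\ref{lem:SmallCofactorA} can be applied component-wise.'' You make explicit the two ingredients hidden in that phrase: the last assertion of Lemma~\ref{lem:SmallCofactorA} to see that the leading term contributes the single monomial $\mathfrak v_i\cdot(b_f\cdot b)$, and axiom~(b) of the module ordering to compare that monomial against contributions from terms in other components (where Lemma~\ref{lem:SmallCofactorA} alone would not suffice, since it only compares monomials inside $\mathcal A$). Your term-by-term treatment and the paper's ``component-wise'' remark amount to the same argument.
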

\begin{proof}
  Lemma~\ref{lem:SmallCofactorA} can be applied component-wise.
\end{proof}

\begin{defn}
  Let $G$ be a finite subset of $F\setminus\{0\}$. An
  element $f\in F$ has a \emph{standard representation} with respect
  to $G$ if and only if $f$ can be written as
  $$
  f = \sum_{i=1}^m \alpha_i g_i\cdot c_i
  $$
  such that $\alpha_i\in K\setminus\{0\}$, $g_i\in G$, $c_i\in \mathcal
  B_>(\mathcal A)$ is a small cofactor of $\LM(g_i)$, and
  $\LM(g_i)\cdot c_i = \LM(g_i\cdot c_i) \le \LM(f)$, for all
  $i=1,...,m$.
%
\end{defn}

\begin{defn}
  A \emph{normal form} $\NF$ on $F$ assigns to any $f\in F$ and any
  finite subset $G\subset F\setminus\{0\}$ an element $\NF(f,G)\in F$,
  such that the following holds.
  \begin{enumerate}
  \item $\NF(0,G)=0$.
  \item If $\NF(f,G)\not=0$ then $\LM(g)\not\strdiv
    \LM\left(\NF(f,G)\right)$, for all $g \in G$.
  \item $f-\NF(f,G)$ has a standard representation with respect to
    $G$.
  \end{enumerate}
\end{defn}

\begin{algorithm}[H]
  \DontPrintSemicolon
  \KwData{$f\in F$ and a finite subset $G\subset F\setminus \{0\}$}
  \KwResult{$\NF(f,G)$}
  \Begin{
    $f_r\longleftarrow f$\;
    \While{$f_r\not=0$ and there is some $g\in G$ with $\LM(g)\strdiv \LM(f_r)$}{
      Let $c\in \mathcal B_>(\mathcal A)$ be the small cofactor of $\LM(g)$ with
      $\LM(f_r) = \LM(g)\cdot c$\;
      $f_r \longleftarrow f_r-\frac{\LC(f_r)}{\LC(g)}g\cdot c$\;
    }
    \Return $f_r$\;
  }
  \caption{A normal form algorithm}
  \label{alg:NF}
\end{algorithm}
\begin{lem}\label{lem:algNF}
  If the monomial ordering on $\mathcal A$ is admissible, then
  Algorithm~\ref{alg:NF} computes a normal form on $F$.
\end{lem}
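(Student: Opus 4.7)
The plan is to verify termination of Algorithm~\ref{alg:NF} and the three defining properties of a normal form in turn, with Lemma~\ref{lem:SmallCofactor} supplying essentially all of the non-trivial input.

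For termination, I would track $\LM(f_r)$ across iterations. At each pass through the while loop the algorithm picks some $g\in G$ and the unique small cofactor $c\in\mathcal B_>(\mathcal A)$ of $\LM(g)$ with $\LM(g)\cdot c = \LM(f_r)$; Lemma~\ref{lem:SmallCofactor} then gives $\LM(g\cdot c) = \LM(g)\cdot c = \LM(f_r)$, so the assignment $f_r \longleftarrow f_r - \tfrac{\LC(f_r)}{\LC(g)}\,g\cdot c$ cancels the leading term exactly and produces a new $f_r$ that is either zero or has strictly smaller leading monomial. Because the ordering on $\mathcal A$ is admissible, the induced ordering on $\Mon(F)$ has no infinite strictly decreasing chain (by pigeonholing the component $\mathfrak v_i$ one would otherwise produce an infinite descent in $\mathcal B_>(\mathcal A)$), so the loop must halt.

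For the defining properties, (1) is immediate since $f=0$ causes the loop to be skipped, and (2) is precisely the negation of the loop condition at termination. For (3), let $f_r^{(0)}=f, f_r^{(1)},\dots,f_r^{(m)}=\NF(f,G)$ be the successive values of $f_r$, produced by reducers $g_j\in G$ and small cofactors $c_j$ at step $j$. Telescoping yields
\[
f - \NF(f,G) \;=\; \sum_{j=1}^m \frac{\LC(f_r^{(j-1)})}{\LC(g_j)}\, g_j\cdot c_j,
\]
where, by the selection rule in the algorithm and by Lemma~\ref{lem:SmallCofactor}, each $c_j$ is a small cofactor of $\LM(g_j)$ and $\LM(g_j)\cdot c_j = \LM(g_j\cdot c_j) = \LM(f_r^{(j-1)})$. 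The termination argument showed that this sequence of leading monomials is strictly decreasing, so the summand at $j=1$ carries the unique largest leading monomial $\LM(f)$; it therefore survives cancellation and we conclude $\LM(f-\NF(f,G)) = \LM(f)$ when $m\geq 1$, with every $\LM(g_j\cdot c_j) \leq \LM(f-\NF(f,G))$. The case $m=0$ is trivial.

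The main obstacle is really just bookkeeping around Lemma~\ref{lem:SmallCofactor}: it is needed both to guarantee the strict drop of leading monomial at each step (for termination) and to furnish the identity $\LM(g_j)\cdot c_j = \LM(g_j\cdot c_j)$ demanded by the definition of standard representation. Once that has been noted, all three properties read off directly from the loop invariants.
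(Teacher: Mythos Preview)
Your proof is correct and follows essentially the same approach as the paper: both use Lemma~\ref{lem:SmallCofactor} to see that each reduction step strictly lowers $\LM(f_r)$, invoke admissibility (together with finite rank of $F$) for termination, and read off the normal-form properties from the loop condition and the accumulated standard representation. Your write-up is simply more explicit, in particular in verifying $\LM(f-\NF(f,G))=\LM(f)$ so that the bound $\LM(g_j\cdot c_j)\le \LM(f-\NF(f,G))$ required by the definition of standard representation is actually met.
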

\begin{proof}
  In the while-loop of Algorithm~\ref{alg:NF}, $\LM(f_r)$ strictly
  decreases, since $\LM(g\cdot c) = \LM(g)\cdot c = \LM(f_r)$, by
  Lemma~\ref{lem:SmallCofactor}. Since $F$ is of finite rank and since
  the monomial ordering on $\mathcal A$ is admissible,
  Algorithm~\ref{alg:NF} terminates in finite time.

  By construction, the leading monomial of the returned element $f_r$
  is not strictly divisible by the leading monomial of any $g\in G$.
  Moreover, $f-f_r$ has a standard representation with respect to $G$,
  since all cofactors in the algorithm are small, and since in the
  while loop holds $\LM(g\cdot c) = \LM(g)\cdot c$ by
  Lemma~\ref{lem:SmallCofactor} and $\LM(g)\cdot c=\LM(f_r)\le \LM(f)$
  by construction.
\end{proof}

\subsection{Standard bases for right modules over quotients of path algebras}

Let $\mathcal P$, $\mathcal A$ and $F$ be as in the previous
section. We are now studying submodules $M\subset F$ and assume that
we have an admissible monomial ordering on $\mathcal A$.
\begin{defn}
  Let $G\subset M\setminus \{0\}$ be a finite subset. An element $f\in
  F\setminus\{0\}$ is called \emph{reducible with respect to $G$}, if there is
  some $g\in G$ such that $\LM(g)\strdiv \LM(f)$. Otherwise, it is called
  \emph{irreducible with respect to $G$}.

  A finite subest $G\subset M\setminus \{0\}$ is called \emph{interreduced},
  if every $g\in G$ is irreducible with respect to $G\setminus \{g\}$.

  A finite subest $G\subset M\setminus \{0\}$ is called a
  \emph{standard basis} of $M$, if every $f\in M\setminus \{0\}$ is
  reducible with respect to $G$.
\end{defn}


\begin{lem}
  If $G$ is a standard basis of $M$, then every element of $M$ has a
  standard representation with respect to $G$. In particular, $G$
  generates $M$ as a right-$\mathcal A$ module, and $f\in M$ if and
  only if $\NF(f,G)=0$.
\end{lem}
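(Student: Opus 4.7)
The plan is to work directly from the three axiomatic properties of a normal form and the standard basis hypothesis, without appealing to any specific algorithm. The essential observation is that applying a normal form to an element of $M$ must produce another element of $M$, which then forces the result to be zero.

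First I would show that for any $f \in M$, the element $\NF(f,G)$ again lies in $M$. By axiom (3), we can write $f - \NF(f,G) = \sum_{i=1}^m \alpha_i g_i \cdot c_i$ with each $g_i \in G \subset M$, so $f - \NF(f,G) \in M$ since $M$ is a submodule; hence $\NF(f,G) = f - (f - \NF(f,G))$ is the difference of two elements of $M$.

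Next I would derive $\NF(f,G) = 0$ by contradiction. Suppose $\NF(f,G) \neq 0$. By axiom (2), no $\LM(g)$ with $g \in G$ strictly divides $\LM(\NF(f,G))$, i.e., $\NF(f,G)$ is irreducible with respect to $G$. But $\NF(f,G) \in M \setminus \{0\}$ by the previous step, and $G$ is a standard basis of $M$, so every nonzero element of $M$ must be reducible with respect to $G$, which is a contradiction. Therefore $\NF(f,G) = 0$, and axiom (3) then gives $f = f - \NF(f,G)$ as a standard representation with respect to $G$.

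The two consequences fall out immediately. Any standard representation $f = \sum \alpha_i g_i \cdot c_i$ expresses $f$ as a right-$\mathcal A$ linear combination of elements of $G$, so $G$ generates $M$. For the equivalence, we just proved $f \in M \Rightarrow \NF(f,G) = 0$; conversely, if $\NF(f,G) = 0$ then axiom (3) says $f$ itself has a standard representation $\sum \alpha_i g_i \cdot c_i$ with $g_i \in G \subset M$, hence $f \in M$. There is no serious obstacle here; the only point that requires care is to avoid relying on Algorithm~\ref{alg:NF} and instead use only the axiomatic properties (1)--(3), so that the lemma is a statement about every normal form, not just the one computed by the explicit algorithm.
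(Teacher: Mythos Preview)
Your proof is correct. It differs from the paper's in one meaningful respect: the paper argues by tracking the execution of Algorithm~\ref{alg:NF}, observing that the intermediate element $f_r$ in the while-loop remains in $M$ at each step, so that the standard basis property forces the loop to continue until $f_r=0$. You instead argue purely from the three axioms defining a normal form, never invoking the specific reduction procedure. Both arguments hinge on the same observation (the normal form of an element of $M$ lies in $M$ and hence cannot be irreducible unless it is zero), but your route establishes the lemma for \emph{every} normal form satisfying the axioms, whereas the paper's proof, as written, is tied to the particular normal form produced by Algorithm~\ref{alg:NF}. Your version is therefore slightly more general and arguably cleaner; the paper's version is more concrete but implicitly assumes the reader identifies $\NF$ with the algorithmic one.
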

\begin{proof}
  Let $f\in F$. Since $G\subset M$, $\NF(f,G)=0$ implies that $f\in
  M$.

  Now, assume that $f\in M$.  Since $f\in M$, the element $f_r$ in the
  while-loop of Algorithm~\ref{alg:NF} is in $M$ as well. Hence, by
  definition of a standard basis, there is $g\in G$ whose leading
  monomial is a strict divisor of the leading monomial of
  $f_r$. Hence, the algorithm will continue until $f_r=0$.
\end{proof}

\begin{lem}\label{lem:make_interreduced}
  Let $G\subset M\setminus \{0\}$ be a finite subset. If $f\in
  M\setminus \{0\}$ is reducible with respect to $G$ then it is reducible with respect to 
  $$
  \left\{\NF\left(g, G\setminus \{g\}\right)\right\} \cup G\setminus\{g\}
  $$
  for all $g\in G$.
\end{lem}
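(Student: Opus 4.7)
The plan is a case analysis on how the reducibility of $f$ is witnessed in $G$. Write $G' = \{\NF(g, G\setminus\{g\})\} \cup G\setminus\{g\}$ and $\tilde g = \NF(g, G\setminus\{g\})$. Since $f$ is reducible with respect to $G$, there is some $h \in G$ with $\LM(h)\strdiv \LM(f)$. If there exists any such witness $h \in G\setminus\{g\}$, then $h \in G'$ and we are done immediately. The non-trivial case is therefore the one in which the \emph{only} possible witness in $G$ is $g$ itself, i.e.\@ $\LM(g)\strdiv \LM(f)$ while $\LM(h)\not\strdiv \LM(f)$ for every $h \in G\setminus\{g\}$.

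In that case I first claim that $g$ must itself be irreducible with respect to $G\setminus\{g\}$. For suppose some $h \in G\setminus\{g\}$ satisfied $\LM(h)\strdiv \LM(g)$. By transitivity of strict divisibility (remarked on right after the definition of $\strdiv$), $\LM(h)\strdiv \LM(f)$, contradicting the assumption. So no element of $G\setminus\{g\}$ strictly divides $\LM(g)$.

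The crux is then to show that $\LM(\tilde g) = \LM(g)$, for once this is established $\tilde g\in G'$ witnesses reducibility of $f$ with respect to $G'$. I would derive this from the defining properties of $\NF$: rewrite axiom~(3) as $g = \tilde g + \sum_i \alpha_i h_i\cdot c_i$ where $h_i \in G\setminus\{g\}$, $c_i$ is a small cofactor of $\LM(h_i)$, $\LM(h_i\cdot c_i) = \LM(h_i)\cdot c_i$, and $\LM(h_i\cdot c_i)\le \LM(g-\tilde g)$. The small-cofactor property (see the definition preceding Remark~\ref{rk:decompose}) gives $\LM(h_i)\strdiv \LM(h_i\cdot c_i)$. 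Combined with the irreducibility of $g$ (shown above) and of $\tilde g$ when $\tilde g\ne 0$ (axiom~(2)), we obtain $\LM(h_i\cdot c_i)\ne \LM(g)$ and $\LM(h_i\cdot c_i)\ne \LM(\tilde g)$ for every $i$, for otherwise $\LM(h_i)$ would strictly divide one of $\LM(g)$, $\LM(\tilde g)$.

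The main obstacle (and really the only calculation) is then a short case split on whether $\LM(\tilde g)$ is less than, equal to, or greater than $\LM(g)$, plus the case $\tilde g = 0$. In each case except $\LM(\tilde g) = \LM(g)$, the bound $\LM(h_i\cdot c_i)\le \LM(g-\tilde g)$ together with the strict inequalities just derived forces $\LM\bigl(\sum_i \alpha_i h_i\cdot c_i\bigr) < \LM(g-\tilde g)$, contradicting the identity $\sum_i \alpha_i h_i\cdot c_i = g-\tilde g$. Hence $\LM(\tilde g) = \LM(g) \strdiv \LM(f)$, so $\tilde g \in G'$ reduces $f$ and the lemma follows.
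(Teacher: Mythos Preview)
Your proof is correct and takes essentially the same approach as the paper's: both hinge on transitivity of $\strdiv$ and on the fact that $\LM(\tilde g)\neq\LM(g)$ forces some $g'\in G\setminus\{g\}$ to satisfy $\LM(g')\strdiv\LM(g)$. The paper's version is more terse---it states this implication directly and then uses $g'$ as the reducer of $f$, whereas you unpack the standard-representation argument explicitly and phrase the conclusion as $\LM(\tilde g)=\LM(g)$---but the mathematical content is the same.
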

\begin{proof}
  If $\LM(f)$ is strictly divisible by the leading monomial of some
  element of $G\setminus \{g\}$ or if $\LM\left(\NF\left(g, G\setminus
      \{g\}\right)\right) = \LM(g)$, then there is nothing to show.

  Otherwise, $\LM(g)\strdiv \LM(f)$ and there is some $g'\in
  G\setminus \{g\}$ with $\LM(g')\strdiv \LM(g)$. Hence,
  $\LM(g')\strdiv \LM(f)$, since strict divisibility is transitive.
\end{proof}

\begin{cor}\label{cor:interred}
  For any finite subset $G\subset M\setminus\{0\}$, there is an
  \emph{interreduced} finite subset $\interred(G)\subset
  M\setminus\{0\}$ such that if $f$ is reducible with respect to $G$
  then $f$ is reducible with respect to $\interred(G)$, for all $f\in
  F$.
\end{cor}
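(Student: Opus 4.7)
The plan is to construct $\interred(G)$ iteratively by repeated application of Lemma~\ref{lem:make_interreduced}. Setting $G_0 = G$, as long as $G_k$ is not interreduced I would pick some $g\in G_k$ that is reducible with respect to $G_k\setminus\{g\}$, compute $h=\NF(g, G_k\setminus\{g\})$, and let $G_{k+1} = (G_k\setminus\{g\})\cup\{h\}$ if $h\neq 0$, or $G_{k+1} = G_k\setminus\{g\}$ otherwise. The output $\interred(G)$ is the final $G_N$; by construction it is a finite interreduced subset of $M\setminus\{0\}$.

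For the reducibility claim, I would argue by induction on the step number, invoking Lemma~\ref{lem:make_interreduced} at each step. The lemma covers the case $h\neq 0$ directly, and its proof applies verbatim to any $f\in F\setminus\{0\}$ since it only uses transitivity of $\strdiv$, so the ``for all $f\in F$'' version requested by the corollary costs nothing extra. When $h=0$, the same argument still works: $g$ being reducible with respect to $G_k\setminus\{g\}$ yields some $g'\in G_k\setminus\{g\}$ with $\LM(g')\strdiv \LM(g)\strdiv \LM(f)$, so $f$ remains reducible with respect to $G_k\setminus\{g\}$ by transitivity of $\strdiv$.

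The main obstacle is termination. My plan is to associate to each $G_k$ the multiset $L(G_k) = \{\LM(g) : g\in G_k\}$ of leading monomials and compare successive $L(G_k)$ under the multiset extension of the monomial ordering on $\mathcal A$. A replacement step cancels the leading term of $g$ in the very first iteration of Algorithm~\ref{alg:NF} (by Lemma~\ref{lem:SmallCofactor}), so the returned $h$ satisfies either $h=0$ or $\LM(h)<\LM(g)$; in either case $L(G_{k+1})$ is strictly smaller than $L(G_k)$ in the multiset order. Since admissibility rules out any infinite strictly descending chain of monomials in $\mathcal A$, its multiset extension is well-founded, and the iteration must therefore stop after finitely many steps, producing the desired $\interred(G)$.
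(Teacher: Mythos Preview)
Your argument is correct and follows the same iterative replacement strategy as the paper's proof. You are in fact a bit more careful on two points: you treat the case $h=0$ explicitly (the paper's phrasing $\{g'\}\cup G\setminus\{g\}$ tacitly assumes $g'\neq 0$), and your termination argument via the multiset extension of the admissible ordering is the right general justification, whereas the paper's phrase ``we only have finitely many monomials'' is literally valid only in the finite-dimensional situation.
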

\begin{proof}
  If $G$ is not interreduced, there is some $g\in G$ such that
  $g'=\NF(g,G\setminus \{g\})\not=g$ and $\LM(g')<\LM(g)$.  We replace
  $G$ by $\{g'\}\cup G\setminus \{g\}$. By the preceding Lemma, the
  change of $G$ does not decrease the set of elements of $F$ that are
  reducible with respect to $G$.

  Since the leading monomial strictly decreases and we only have
  finitely many monomials, we obtain an $\interred(G)$ after finitely
  many steps.
\end{proof}

\begin{defn}
  We say that a finite subset $G\subset M\setminus \{0\}$
  satisfies \emph{property (T)}, if it is interreducted, and $g\cdot
  c$ has a standard representation with respect to $G$, for every
  $g\in G$ and the cofactor $c$ of every minimal toppling of $\LM(g)$.
\end{defn}

\begin{thm}[\cite{Green:Habil}]\label{thm:criterion}
  If a finite subset $G\subset M\setminus \{0\}$ generates $M$
  as a right-$\mathcal A$ module and has property (T), then it is a
  standard basis of $M$.
\end{thm}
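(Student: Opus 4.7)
The plan is to show that every $f\in M\setminus\{0\}$ is reducible with respect to $G$. Since $G$ generates $M$ as a right-$\mathcal{A}$ module, we can write $f=\sum_{i=1}^m\alpha_i\,g_i\cdot c_i$ with $\alpha_i\in K\setminus\{0\}$, $g_i\in G$, $c_i\in\mathcal{B}_>(\mathcal{A})$, and pairwise distinct pairs $(g_i,c_i)$. Set $b^*=\max_i\LM(g_i\cdot c_i)\in\Mon(F)$ and let $N$ be the number of indices $i$ with $\LM(g_i\cdot c_i)=b^*$. Admissibility of the monomial ordering makes the lexicographic order on such pairs $(b^*,N)$ well-founded, so one may choose a representation of $f$ minimising $(b^*,N)$. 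The aim is to show that in this minimal representation, $b^*=\LM(f)$ and some index $i$ satisfies both $\LM(g_i\cdot c_i)=b^*$ and $c_i$ a small cofactor of $\LM(g_i)$: Lemma~\ref{lem:SmallCofactor} then yields $\LM(g_i)\cdot c_i=b^*=\LM(f)$, hence $\LM(g_i)\strdiv\LM(f)$, proving reducibility.

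The first step is a structural observation exploiting that divisibility in a path algebra is one-sided. The lifts of any two monomials of $F$ that both strictly divide $b^*$ are initial segments of the same path in $Q$, so they are comparable under $\strdiv$. Combined with interreducedness of $G$ and left-cancellation of paths, this forces \emph{at most one} index $i$ to satisfy $\LM(g_i\cdot c_i)=b^*$ with $c_i$ a small cofactor of $\LM(g_i)$. Consequently, if every index $i$ attaining $b^*$ has $c_i$ a small cofactor, there is exactly one such $i_0$; no cancellation at level $b^*$ can occur, so $\LM(f)=b^*=\LM(g_{i_0})\cdot c_{i_0}$ and the conclusion holds at once.

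The substantive case is therefore when some index $i$ attains $b^*$ with $c_i$ \emph{not} a small cofactor of $\LM(g_i)$. Because $\strdiv$ is a well-founded partial order on $\mathcal{B}_>(\mathcal{A})$ (corresponding to prefixes of quiver paths), one can pick $c_i'\strdiv c_i$ minimal among strict divisors of $c_i$ that fail to be small cofactors of $\LM(g_i)$; then all proper strict divisors of $c_i'$ are small cofactors, so $c_i'$ is the cofactor of a \emph{minimal} toppling of $\LM(g_i)$. Writing $c_i=c_i'\cdot c_i''$, property (T) supplies a standard representation
$$g_i\cdot c_i'=\sum_k\beta_{ik}\,h_{ik}\cdot d_{ik}\quad\text{with}\quad\LM(h_{ik})\cdot d_{ik}\le\LM(g_i\cdot c_i').$$
Multiplying on the right by $c_i''$, expanding each $d_{ik}\cdot c_i''\in\mathcal{A}$ in the preferred basis as $\sum_j\gamma_{ikj}\,e_{ikj}$, and substituting into the $i$-th summand of the representation of $f$, one obtains a new representation. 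The key estimate to verify is that each $\LM(h_{ik}\cdot e_{ikj})<b^*$, in which case $(b^*,N)$ strictly decreases, contradicting minimality.

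The main obstacle is establishing this key estimate. The inequality $\LM(h_{ik})\cdot d_{ik}\le\LM(g_i\cdot c_i')$ from the standard representation must be propagated through right-multiplication by $c_i''$ and through the expansion of each product $d_{ik}\cdot c_i''$ into monomials $e_{ikj}$ of the preferred basis. This calls for repeated application of the compatibility axiom of the monomial ordering on $F$, together with Lemma~\ref{lem:SmallCofactor} and the remark on composition of small cofactors, carefully tracking how toppling can or cannot cascade through further right-multiplication; the minimality of $c_i'$ is essential here, since it guarantees that no unexpected toppling occurs strictly below the level of $g_i\cdot c_i'$ itself.
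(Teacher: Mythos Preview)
The paper explicitly omits the proof of this theorem, deferring to~\cite{Green:Habil} and remarking that Theorem~\ref{thm:sigBuchberger} is a generalisation; so there is no in-paper argument to compare against directly. Your outline follows the standard Buchberger template, and Case~1 is correct. Case~2, however, contains a genuine error rather than merely an unfinished step. You assert that after replacing the offending summand $g_i\cdot c_i$ via property~(T) and expanding, each resulting term satisfies $\LM(h_{ik}\cdot e_{ikj})<b^*$. This is impossible: those terms sum precisely to $g_i\cdot c_i$, whose leading monomial is $b^*$, so at least one of them must have leading monomial $\ge b^*$. (Concretely, if the standard representation of $g_i\cdot c_i'$ consists of a single term $\beta\,h\cdot d$ with $\LM(h)\cdot d=\LM(g_i\cdot c_i')$, then after multiplying by $c_i''$ you simply recover $g_i\cdot c_i$, and its top monomial sits exactly at $b^*$.) Hence your measure $(b^*,N)$ need not decrease, and the minimality argument collapses. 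Your final paragraph gestures at Lemma~\ref{lem:SmallCofactor} and the compatibility axiom, but those tools only govern multiplication by \emph{small} cofactors; since $c_i''$ is in general not a small cofactor of $\LM(g_i\cdot c_i')$, the inequalities do not propagate through right-multiplication by $c_i''$ in the way you need.

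The approach is salvageable, but the invariant has to live in $\mathcal P$ rather than in $\mathcal A$. Replace $\LM(g_i\cdot c_i)\in\Mon(F)$ by the \emph{lifted} product $\lambda(\LM(g_i))\cdot\lambda(c_i)$, where there are no topplings. The crucial gain is that a toppling cofactor forces a \emph{strict} drop at the level of $\mathcal P$ (the product $\lambda(\LM(g_i))\cdot\lambda(c_i')$ is not a standard monomial, so every monomial in the preferred-basis expansion of $\LM(g_i)\cdot c_i'$ lifts strictly below it, by the argument in the proof of Lemma~\ref{lem:SmallCofactorA}); this strict drop then survives right-multiplication by $\lambda(c_i'')$ because the ordering on $\Mon(\mathcal P)$ is compatible with right multiplication. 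One must also check well-foundedness, which is not automatic for negative orderings on $\mathcal P$, but the degrees involved stay bounded by those of the original representation.
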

Our main result, Theorem~\ref{thm:sigBuchberger}, is a generalisation of
Theorem~\ref{thm:criterion}. Therefore, for the sake of brevity, we do not
include a proof of Theorem~\ref{thm:criterion}. Note that~\cite{Green:Habil}
obtains a similar result for so-called \emph{heady} standard bases, which in
turn is a special case of signed standard bases.

\subsection{A short account on the chain criterion}

We now focus on the case that $\mathcal A$ is finite dimensional over
$K$. Then, there is a standard basis of $M$, simply since $\Mon(F)$ is
finite. But finding a standard basis by an enumeration of leading
monomials would certainly not be very
efficient. Theorem~\ref{thm:criterion} provides a more efficient
algorithm for the computation of a standard basis of $M$, similar to
Buchberger's algorithm.

\begin{algorithm}[H]
  \DontPrintSemicolon
  \KwData{$G=\{g_1,...,g_k\}$, generating $M$ as a right-$\mathcal A$ module.}
  \KwResult{An interreduced standard basis of $M$.}
  \Begin{ \While{ There is some $g\in G$ and a cofactor $c$
      of a minimal toppling of $\LM(g)$ such that $g\cdot c$ has
      no standard representation with respect to $G$ }
    {
      $G\longleftarrow \interred\left(G\cup \left\{\NF(g\cdot c, G)\right\}\right)$\;
    }
    \Return $G$\;
  }
  \caption{A Buchberger style computation of a standard basis}
  \label{alg:Buchberger}
\end{algorithm}

\begin{lem}\label{lem:Buchberger}
  Algorithm~\ref{alg:Buchberger} computes an interreduced standard
  basis of $M$, in the case that $\mathcal A$ is finite dimensional
  over $K$.
\end{lem}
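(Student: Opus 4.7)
The plan is to verify two things for Algorithm~\ref{alg:Buchberger}: termination, and correctness of the output. For correctness, we want to invoke Theorem~\ref{thm:criterion}, so we need to check on exit that $G$ (a) still generates $M$, (b) is interreduced, and (c) satisfies property (T); termination exploits the finiteness of $\Mon(F)$.

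I would begin with the module-generation invariant. Initially $G$ generates $M$. In the body of the loop, $\NF(g\cdot c,G)$ differs from $g\cdot c$ by a standard representation in $G$, hence lies in the submodule generated by $G\cup\{g\cdot c\}\subseteq M$. The subsequent $\interred$ only replaces elements by normal forms of themselves modulo the rest, so the generated submodule is unchanged. Hence $G$ generates $M$ at every iteration.

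Next I would prove termination by exhibiting a strictly increasing set. Define
\[
L(G)=\{m\in\Mon(F)\co \exists\, g\in G,\ \LM(g)\strdiv m\}\subseteq \Mon(F).
\]
Since $\mathcal A$ is finite dimensional, $\Mon(F)$ is finite, so $L(G)$ takes only finitely many values. I claim $L$ strictly grows in every iteration. Let $h=\NF(g\cdot c,G)$. First, $h\neq 0$: otherwise $g\cdot c-h=g\cdot c$ would have a standard representation with respect to $G$, contradicting the loop condition. Second, by the definition of a normal form, no $\LM(g')$ with $g'\in G$ strictly divides $\LM(h)$, so $\LM(h)\notin L(G)$, while $\LM(h)\in L(G\cup\{h\})$. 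Finally, applying Lemma~\ref{lem:make_interreduced} to each interreduction step shows $L(G\cup\{h\})\subseteq L(\interred(G\cup\{h\}))$. Thus $L$ strictly enlarges each pass, forcing termination.

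For correctness, upon exit the loop condition fails, so for every $g\in G$ and every cofactor $c$ of a minimal toppling of $\LM(g)$, the element $g\cdot c$ admits a standard representation with respect to $G$. The final $\interred$ call guarantees $G$ is interreduced. Combined with the standing invariant that $G$ generates $M$, this is exactly property~(T), and Theorem~\ref{thm:criterion} then delivers that $G$ is a standard basis of $M$. The main subtlety I expect is the interplay between the interreduction step and the strictly-grows argument: it is tempting to track $\{\LM(g)\co g\in G\}$ itself, but interreduction can shrink leading monomials, so one has to pass to the divisibility-closed set $L(G)$ and appeal to Lemma~\ref{lem:make_interreduced} to see that this only expands.
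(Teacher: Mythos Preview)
Your proof is correct and follows essentially the same approach as the paper's: both argue termination by showing that the set of monomials strictly divisible by some $\LM(g)$ with $g\in G$ strictly grows each iteration, and both appeal to Theorem~\ref{thm:criterion} (together with the interreduction lemma/corollary) for correctness. Your version is simply more explicit about the module-generation invariant and about why interreduction does not shrink $L(G)$; the paper compresses this into a citation of Corollary~\ref{cor:interred}.
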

\begin{proof}
  In the while-loop of the algorithm, the number of monomials of $M$
  that are strictly divisible by the leading monomial of an element of
  $G$ strictly increases. Since there are only finitely many
  monomials, the computation terminates in finite time. The result is
  a standard basis, by Corollary~\ref{cor:interred} and
  Theorem~\ref{thm:criterion}.
\end{proof}

Algorithm~\ref{alg:Buchberger} is certainly more efficient than an
attempt to directly enumerate all leading monomials of a sub-module
$M\subset F$. However, there is a common problem in the computation of
standard bases: In the head of the while-loop in
Algorithm~\ref{alg:Buchberger}, one needs to test whether $g\cdot c$
has a standard representation with respect to $G$, \emph{i.e.},
whether $\NF(g\cdot c,G)=0$. If $\NF(g\cdot c,G)=0$, then the pair
$(g,c)$ does not contribute to the standard basis. Since the
computation of $\NF(g\cdot c,G)$ is a non-trivial task, it would be
nice to have a criterion that disregards the pair $(g,c)$
\emph{without} computation of a normal form.

There are several criteria known from the computation of Gr\"obner
bases in the \emph{commutative} case, such as Buchberger's product or
chain criteria~\cite{Buchberger:Criteria}, or Faug\`ere's \FF and
rewritten criteria~\cite{Faugere:F5}.

We discuss how these criteria apply in our non-commutative
context. First of all, since we consider modules of rank greater than
one, the product criterion would not hold, even in the commutative
case~\cite[Remark~2.5.11]{Singular:Book}. Faug\`ere's criteria are the
subject of the next section. Here, we argue that in fact we are
already using some kind of chain criterion.

Our rings being highly non-commutative is not the only problem with
using existing criteria: The computation of standard bases usually is
based on $S$-polynomials, which are not even mentioned in
Algorithm~\ref{alg:Buchberger}. However, a different point of view
shows that $S$-polynomials are hidden in the notion of a
toppling. This point of view appears in more detail
in~\cite{Green:Habil}, by describing the computation of normal forms
as ``two-speed reduction''.

Namely, one could model $F=\mathcal A^r$ as $\mathcal
P^r/\ker(\psi)^r$, using a two-sided standard basis $S$ of
$\ker(\psi)$. Computing a standard basis of a sub-module $M\subset F$
could be done by lifting the generators of $M$ to elements of
$\stdmon_{\mathcal A}(\mathcal P)^r$, and adding to the lifted
generators a copy of $S$ in each component of the free module.

Let $\tilde g\in \mathcal P^r$ be the lift of a generator $g$ of $M$ with
$\LM(\tilde g)$ in the $i$-th component, and $s\in S$. If there are monomials
$a,b,c$ of $\mathcal P$ such that $\mathfrak v_i\cdot a\cdot b = \LM(\tilde
g)$ and $b\cdot c=\LM(s)$, then the $S$-polynomial of $\tilde g$ and
$\mathfrak v_i\cdot s$ is
$$
\tilde g\cdot c - \frac{\LC(\tilde g)}{\LC(s)}\mathfrak v_i\cdot a\cdot s
$$

Note that this only occurs if $\LM(\tilde g)\cdot c$ does not belong to
$\stdmon_{\mathcal A}(\mathcal B)^r$. When we now apply $\psi^r$, then the
$S$-polynomial is mapped to $g\cdot \psi(c)$: The second summand vanishes,
because $s$ belongs to a two-sided standard basis of $\ker(\psi)$. Hence, the
$S$-polynomial in $\mathcal P^r$ corresponds to multiplying an element of $F$
with a toppling $\psi(c)$ of its leading monomial.

Let $g,\tilde g, s,a,b,c$ be as above. Assume that
there is $s\not=s'\in S$ and monomials $a',b',c',d$
of $\mathcal P$ such that $\mathfrak v_i\cdot a'\cdot b' =
\LM(\tilde g)$ and $b'\cdot c'=\LM(s')$ and $a'\cdot
\LM(s')\cdot d = a\cdot \LM(s)$. Then the chain criterion
says that the $S$-polynomial of the pair $(\tilde g,\mathfrak v_i\cdot
s)$ does not need to be considered.

In our context, if we find $s'$ as above, then $c = c'\cdot d$ is not cofactor
of a minimal toppling of $\LM(g)$, since $c'$ is a cofactor of a toppling as
well. Hence, the chain criterion corresponds to the fact that we only consider
\emph{minimal} topplings in Algorithm~\ref{alg:Buchberger}.

\section{Signatures}
\label{sec:signature}

As before, let $M$ be a finitely generated sub-module of a free
right-$\mathcal A$ module $F$ of rank $r$. We fix a finite ordered
generating set $\{\hat g_1,...,\hat g_m\}$ of $M$. Let $E=\mathcal
P^m$ be a free right-$\mathcal P$ module with free generators
$\mathfrak e_1,...,\mathfrak e_m$. We fix a monomial ordering on
$\mathcal P$ that induces an admissible monomial ordering on $\mathcal
A$, and fix a monomial ordering on $E$
compatible with the monomial ordering on $\mathcal P$.

By applying $\psi$ and evaluating
at the generating set of $M$, we obtain a map $ev\co E\to M$ with
$ev(\mathfrak e_i\cdot c) = \hat g_i\cdot \psi(c)$, for all monomials
$\mathfrak e_i\cdot c$ of $E$.
\begin{defn}
  A \emph{signed element} of $M$ is a pair $(f_u, s)$ with $f_u\in M$ and
  $s\in \Mon(E)$, such that there is some $\tilde f\in E$ with
  $\LM(\tilde f) = \sigma$ and $ev(\tilde f)=f_u$.

  For a signed element $f = (f_u,s)$ of $M$, we define the
  \emph{unsigned element} $\poly(f)=f_u$ and the \emph{signature}
  $\sig(\overline f)=s$.

  A \emph{signed subset} of $M$ is a set formed by signed elements of
  $M$.
\end{defn}

If $b$ is a monomial of $\mathcal A$ and $f$ is a signed element of
$M$, then for all $\tilde b\in \mathcal P$ with
$\psi(\tilde b)=b$, $(\poly(f)\cdot b, \LM(\sig(f)\cdot \tilde b))$ is a signed
element of $M$ as well.  For simplicity, we write $\LM(f) = \LM(\poly(f))$
and $\LC(f) = \LC(\poly(f))$.

\begin{defn}
  Let $f\in M$ and $s\in \Mon(E)$. If there is a signed element $g$ of $M$
  with $f = \poly(g)$ and $\sig(g)<s$, then $f$ is \emph{dominated by $s$}.  A
  signed element $f$ of $M$ is \emph{suboptimal}, if $\poly(f)$ is dominated
  by $\sig(f)$.
\end{defn}

\begin{defn}
  Let $G$ be a finite signed subset of $M$, let $f\in F$, and let $s\in \Mon(E)$.
  An \emph{$s$-standard representation with respect to $G$} of $f$ is a list
  of triples $(\alpha_1,g_1,c_1),..., (\alpha_k,g_k,c_k)$, where $\alpha_i\in
  K$, $g_i\in G$, and $c_i\in \mathcal B_>(\mathcal A)$ is a small cofactor of
  $\LM(g_i)$, for $i=1,...,k$, such that
  $$
  \poly(f) = \sum_{i=1}^k \alpha_i\poly(g_i)\cdot c_i
  $$ 
  and $s>\LM\left(\sig(g_i)\cdot \lambda(c_i)\right)\not=0$ for $i=1,...,k$.
\end{defn}

Note that we do not bound $\LM(\poly(g_i)\cdot c_i)$ in terms of
$f$. For simplicity, if $f$ is a signed element of $M$, then a
\emph{standard representation} of $f$ shall denote a $\sig(f)$-standard
representation of $\poly(f)$.
Clearly, if a signed element $f$ of $F$ has a standard representation with
respect to a signed subset of $M$, then it is a suboptimal signed element of
$M$, and $\sig(f)$ is the leading monomial of an element of $\ker(ev)$.

\begin{defn}
  Let $G$ be a signed subset of $M\setminus \{0\}$, let $f\in F$,
  and let $s\in \Mon(E)$.
  \begin{itemize}
  \item We say that $f$ is \emph{$s$-reducible} (resp.\@ \emph{weakly}
    $s$-reducible) with respect to $G$, if there is some $g\in G$ and
    a small cofactor $c$ of $\LM(g)$ such that $\LM(g)\cdot c =
    \LM(f)$ and $\sig(g)\cdot \lambda(c)<s$ (resp.\@ $\sig(g)\cdot
    \lambda(c)\le s$).
  \item We say that $f$ is (weakly) $s$-reducible with respect to $M$,
    if there is some signed element $g$ of $M\setminus \{0\}$ such
    that $f$ is (weakly) $s$-reducible with respect to $\{g\}$.
  \item We say that $f$ is \emph{$s$-irreducible} with respect to $G$, if
    $\poly(f)=0$ or $f$ is not $s$-reducible.
  \end{itemize}
\end{defn}

\begin{lem}\label{lem:reducible_by_irreducible}
  Let $f\in F$ and let $s\in \Mon(E)$. If $f$ is (weakly) $s$-reducible with
  respect to $M$ then there is some signed element $g$ of $M\setminus \{0\}$
  such that $\poly(g)$ is $\sig(g)$-irreducible with respect to $M$ and $f$ is
  (weakly) $s$-reducible with respect to $\{g\}$.
\end{lem}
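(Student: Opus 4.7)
The plan is a descent on the signatures of witnesses. Starting from any witness for the (weak) $s$-reducibility of $f$, I will repeatedly replace it by a witness with a strictly smaller combined signature; admissibility then forces this descent to stop at an irreducible witness.

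First, take any signed element $g_0\in M\setminus\{0\}$ witnessing the reducibility: a small cofactor $c_0$ of $\LM(g_0)$ satisfies $\LM(g_0)\cdot c_0=\LM(f)$ and $\sig(g_0)\cdot\lambda(c_0)<s$ (resp.\ $\le s$). If $\poly(g_0)$ is already $\sig(g_0)$-irreducible with respect to $M$, we are done. Otherwise the $\sig(g_0)$-reducibility of $\poly(g_0)$ furnishes a signed element $g_1\in M\setminus\{0\}$ together with a small cofactor $c_1$ of $\LM(g_1)$ such that $\LM(g_1)\cdot c_1=\LM(g_0)$ and $\sig(g_1)\cdot\lambda(c_1)<\sig(g_0)$. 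Using the observation that the composition of two small cofactors is again a small cofactor, $c_1\cdot c_0$ is a small cofactor of $\LM(g_1)$ with $\LM(g_1)\cdot(c_1\cdot c_0)=\LM(f)$, and its lift is $\lambda(c_1\cdot c_0)=\lambda(c_1)\cdot\lambda(c_0)$. Compatibility of the monomial ordering on $E$ then gives
\[
\sig(g_1)\cdot\lambda(c_1\cdot c_0)=\sig(g_1)\cdot\lambda(c_1)\cdot\lambda(c_0)<\sig(g_0)\cdot\lambda(c_0)\le s,
\]
so $g_1$ is itself a witness for the (weak) $s$-reducibility of $f$, with a strictly smaller combined signature.

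Iterating this construction yields a chain of witnesses $g_0,g_1,g_2,\dots$ whose combined signatures strictly descend in $\Mon(E)$ while remaining below $s$. The main obstacle is to establish termination of this chain. I would handle it by combining admissibility with a finiteness observation: the leading monomials $\LM(g_i)$ all strictly divide the fixed element $\LM(f)$, hence range over the finite set of prefixes of the path $\lambda(\LM(f))$; whenever $\LM(g_{i+1})=\LM(g_i)$, the cofactor $c_{i+1}$ must be a vertex idempotent, and the combined-signature descent collapses to a strict decrease of $\sig(g_i)$ itself in $\Mon(E)$, which admissibility of the monomial ordering forbids occurring infinitely often. The process therefore halts at a witness $g$ with $\poly(g)$ being $\sig(g)$-irreducible with respect to $M$, exactly as required.
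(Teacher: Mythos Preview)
Your proposal is correct and follows essentially the same descent as the paper: start from any witness $(g,c)$, and while $\poly(g)$ is $\sig(g)$-reducible replace it by a new witness with strictly smaller combined signature $\sig(g)\cdot\lambda(c)$, using that the composite of two small cofactors is again small and that the module ordering on $E$ is compatible with right multiplication. The only difference is in the termination step: the paper simply notes that the combined signatures form a strictly decreasing sequence in $\Mon(E)$ and invokes that $<$ is a well-ordering there, whereas your detour through stabilisation of the finitely many prefixes $\LM(g_i)$ ultimately still needs the same well-foundedness (now for the bare signatures $\sig(g_i)$), so it does not buy anything additional.
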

\begin{proof}
  By definition, there is some signed element $g$ of $M\setminus
  \{0\}$ such that there is a small cofactor $c$ of $\LM(g)$ with
  $\LM(f)=\LM(g)\cdot c$ and $\sig(g)\cdot \lambda(c)<s$ (resp.\@
  $\sig(g)\cdot \lambda(c)\le s$).  We will show that one can choose
  $g$ so that $\poly(g)$ is $\sig(g)$-irreducible with respect to $M$.

  Assume that $g$ is $\sig(g)$-reducible with respect to $M$. Then
  there is some signed element $g'$ of $M\setminus \{0\}$ and a small
  cofactor $c'$ of $\LM(g')$ such that $\LM(g')\cdot c'=\LM(g)$ and
  $\sig(g')\cdot \lambda(c')<\sig(g)$. Since $c'\cdot c$ is a small
  cofactor of $\LM(g')$ and $\sig(g')\cdot \lambda(c'\cdot c) <
  \sig(g)\cdot \lambda(c)<s$, we can replace $g$ by $g'$.

  Since $\lambda(c'\cdot c) < \sig(g)\cdot \lambda(c)<s$ and $<$ is
  supposed to be a well-ordering on $\Mon(E)$, a replacement of $g$ by
  $g'$ can only occur finitely many times. Hence, eventually we find
  $g$ so that $\poly(g)$ is $\sig(g)$-irreducible with respect to $M$.
\end{proof}

\begin{defn}
  A \emph{signed normal form} on $F$ assigns to any $f\in F$, any
  finite signed subset $G$ of $F\setminus\{0\}$ and any $s\in \Mon(E)$
  an element $\NF_s(f,G)\in F$, such that the following holds.
  \begin{enumerate}
  \item If $f=0$ then $\NF_s(f,G)=0$.
  \item $\NF_s(f,G)$ is $s$-irreducible with respect to $G$.
  \item $f-\NF_s(f,G)$ has an $s$-standard representation with respect to $G$.
  \end{enumerate}
\end{defn}

If $f$ is a signed element of $M$, then we implicitly assume $s=\sig(f)$ in
the two preceding definitions, unless stated otherwise. Note that a signed
element $f$ of $M$ is irreducible with respect to $\{f\}$.
We denote $\NF(f,G) = \left(\NF_{\sig(f)}(\poly(f),G), \sig(f)\right)$; it is
easy to see that this is a signed element.

\begin{algorithm}[H]
  \DontPrintSemicolon
  \KwData{$f\in F$, a finite signed subset $G\subset F\setminus \{0\}$ and $s\in \Mon(s)$}
  \KwResult{$\NF_s(f,G)$}
  \Begin{
    $f_r\longleftarrow f$\;
    \While{$\poly(f_r)\not=0$ and there is some $g\in G$ and a small cofactor $c$ of $\LM(g)$
      with $\LM(g)\cdot c = \LM(f_r)$ and $\sig(g)\cdot \lambda(c)<s$}{
      $f_r \longleftarrow f_r-\frac{\LC(f_r)}{\LC(g)}\poly(g)\cdot c$\;
    }
    \Return $f_r$\;
  }
  \caption{A signed normal form algorithm}
  \label{alg:sigNF}
\end{algorithm}
\begin{lem}
  Algorithm~\ref{alg:sigNF} computes a signed normal form on $F$.
\end{lem}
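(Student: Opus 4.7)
The plan is to mirror the proof of Lemma~\ref{lem:algNF}, verifying termination and then the three defining properties of a signed normal form in turn.

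First I would handle termination. At each iteration, the loop condition gives $g \in G$ and a small cofactor $c$ of $\LM(g)$ with $\LM(g)\cdot c = \LM(f_r)$. By Lemma~\ref{lem:SmallCofactor}, $\LM(\poly(g)\cdot c) = \LM(g)\cdot c = \LM(f_r)$, so the update
\[
f_r \longleftarrow f_r - \frac{\LC(f_r)}{\LC(g)}\poly(g)\cdot c
\]
cancels the leading term of $\poly(f_r)$, and $\LM(f_r)$ strictly decreases (or $\poly(f_r)$ becomes $0$). Since the monomial ordering on $\mathcal A$ is admissible and $F$ has finite rank, the induced ordering on $\Mon(F)$ admits no infinite strictly decreasing chain, so the while-loop exits after finitely many steps.

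Next I would check property (1): if $f=0$, then $\poly(f_r)=0$ initially, the loop does not execute, and the returned value is $0$ (keeping the signature intact). Property (2) follows immediately from the exit condition of the loop: upon return, either $\poly(f_r)=0$, or else there is no $g \in G$ and small cofactor $c$ of $\LM(g)$ with $\LM(g)\cdot c = \LM(f_r)$ and $\sig(g)\cdot \lambda(c) < s$, which is precisely the definition of $s$-irreducibility with respect to $G$.

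For property (3), I would argue by accumulating the triples used by the algorithm. Let $(\alpha_1, g_1, c_1), \dots, (\alpha_k, g_k, c_k)$ be the triples with $\alpha_i = \LC(f_r^{(i-1)})/\LC(g_i)$ used in successive iterations, where $f_r^{(i)}$ denotes the value of $f_r$ after $i$ iterations. Then telescoping gives
\[
f - f_r^{(k)} = \sum_{i=1}^{k} \alpha_i \poly(g_i) \cdot c_i.
\]
By the loop condition, each $c_i$ is a small cofactor of $\LM(g_i)$ and $\sig(g_i)\cdot \lambda(c_i) < s$, yielding the required bound on the signatures. The main subtle point, and the only place one must be careful, is verifying $\LM(\sig(g_i)\cdot \lambda(c_i)) \ne 0$: this follows from $\LM(g_i)\cdot c_i = \LM(f_r^{(i-1)}) \ne 0$, since writing $\sig(g_i) = \mathfrak{e}_j \cdot \tilde b$ one sees that $\tilde b \cdot \lambda(c_i) \ne 0$ in $\mathcal P$ (the relevant path endpoints must match, exactly as they do for $\LM(g_i)\cdot c_i$ to be nonzero in $F$). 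Putting this together shows $f - \NF_s(f,G)$ has an $s$-standard representation with respect to $G$, completing the verification of the three axioms.
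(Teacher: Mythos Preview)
Your proof is correct and follows exactly the approach the paper indicates: the paper's own proof consists solely of the sentence ``The proof of the lemma is essentially as the proof of Lemma~\ref{lem:algNF},'' and you have carried out precisely that mirroring in detail.

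One minor remark: your justification of the ``subtle point'' that $\sig(g_i)\cdot\lambda(c_i)\neq 0$ via matching path endpoints is not quite on solid ground, since there is no a priori reason the terminal vertex of the path in $\sig(g_i)$ must coincide with that of $\LM(g_i)$. However, this does not affect correctness: the loop condition itself already requires the comparison $\sig(g)\cdot\lambda(c)<s$, and such a comparison is only meaningful between monomials of $E$, so nonvanishing is built into the hypothesis of each iteration. You can simply drop that paragraph or replace it with this observation.
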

\begin{proof}
  The proof of the lemma is essentially as the proof of
  Lemma~\ref{lem:algNF}.
\end{proof}

\subsection{Signed standard bases}
\label{sec:sig_criterion}

The following definitions were adapted from~\cite{ArriPerry:F5revised}.

\begin{defn}\label{def:signed_stb}
  Let $G$ be a finite signed subset of $M\setminus \{0\}$.
  \begin{enumerate}
  \item If every $g\in G$ is irreducible with respect to $G$, and there is
    no $g'\in G$ with a small cofactor $c$ of $\LM(g')$ such that
    $\LM(g')\cdot c=\LM(g)$ and $\sig(g')\cdot \lambda(c)=\sig(g)$, then $G$
    is called \emph{interreduced}.

  \item Assume that each signed element $f$ of $M\setminus \{0\}$ that
    is $\sig(f)$-irreducible with respect to $M\setminus \{0\}$ is
    weakly $\sig(f)$-reducible with respect to $G$. Then $G$ is a
    \emph{signed standard basis} of $M$.
  \end{enumerate}
\end{defn}

Rephrasing the definition, if $G$ is a signed standard basis of $M$ and $f$ is
a $\sig(f)$-irreducible signed element of $M\setminus \{0\}$, then there is
some $g\in G$ and a small cofactor $c$ of $\LM(g)$ such that $\LM(g)\cdot
c=\LM(f)$ and $\sig(g)\cdot \lambda(c)=\sig(f)$.

\begin{lem}
  Let $G$ be a signed standard basis of $M$, let $f\in F\setminus
  \{0\}$ and let $s\in \Mon(E)$.
  \begin{enumerate}
  \item If $f$ is $s$-reducible with respect to $M$ then it is
    $s$-reducible with respect to $G$.
  \item $f$ is an element of $M$ that is dominated by $s$ if and only
    if $\NF_s(f,G)=0$.
  \item $G'=\{\poly(g)\co g\in G\}$ is a standard basis of $M$.
  \end{enumerate}
\end{lem}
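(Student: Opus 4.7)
The plan is to prove the three parts in order, using Lemma~\ref{lem:reducible_by_irreducible} together with Definition~\ref{def:signed_stb}(2) as the main tools; parts (2) and (3) both reduce to part (1). The key mechanism is: whenever an element is $s$-reducible with respect to $M$, Lemma~\ref{lem:reducible_by_irreducible} produces a reducer $g$ with $\poly(g)$ being $\sig(g)$-irreducible with respect to $M$; the signed standard basis property then yields a matching $g'\in G$, and the cofactors are composed using the remark preceding Remark~\ref{rk:decompose}.

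For part (1), assume $f$ is $s$-reducible with respect to $M$. Lemma~\ref{lem:reducible_by_irreducible} gives a signed $g\in M\setminus\{0\}$ with $\poly(g)$ being $\sig(g)$-irreducible with respect to $M$, together with a small cofactor $c$ of $\LM(g)$ such that $\LM(g)\cdot c=\LM(f)$ and $\sig(g)\lambda(c)<s$. Definition~\ref{def:signed_stb}(2) applied to $g$ produces $g'\in G$ and a small cofactor $c'$ of $\LM(g')$ with $\LM(g')\cdot c'=\LM(g)$ and $\sig(g')\lambda(c')\le \sig(g)$. The remark preceding Remark~\ref{rk:decompose} makes $c'\cdot c$ a small cofactor of $\LM(g')$, and compatibility of the monomial ordering on $E$ with right multiplication by monomials of $\mathcal P$ yields $\LM(g')\cdot(c'\cdot c)=\LM(g)\cdot c=\LM(f)$ and $\sig(g')\lambda(c'\cdot c)\le \sig(g)\lambda(c)<s$, so $f$ is $s$-reducible with respect to $G$.

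For part (2), direction $(\Leftarrow)$: if $\NF_s(f,G)=0$, then $f$ admits an $s$-standard representation $f=\sum_i \alpha_i\poly(g_i)\cdot c_i$ with $\LM(\sig(g_i)\lambda(c_i))<s$; lifting each $g_i$ to $\tilde g_i\in E$ with $\LM(\tilde g_i)=\sig(g_i)$ and $ev(\tilde g_i)=\poly(g_i)$ produces $\tilde f:=\sum_i \alpha_i\tilde g_i\cdot \lambda(c_i)\in E$ satisfying $ev(\tilde f)=f\in M$ and $\LM(\tilde f)<s$, so $f$ is dominated by $s$. Direction $(\Rightarrow)$: let $f\in M$ be dominated by $s$ and set $f_r:=\NF_s(f,G)$. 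The same lifting trick applied to the $s$-standard representation of $f-f_r$ shows $f-f_r$ is dominated by $s$, and hence so is $f_r=f-(f-f_r)$. If $f_r\ne 0$, pick a signed representation $(f_r,s_r)$ with $s_r<s$; either directly from Definition~\ref{def:signed_stb}(2) when $(f_r,s_r)$ is $s_r$-irreducible with respect to $M$, or from part (1) when it is $s_r$-reducible, we obtain $g\in G$ and a small cofactor $c$ of $\LM(g)$ with $\LM(g)\cdot c=\LM(f_r)$ and $\sig(g)\lambda(c)\le s_r<s$, contradicting the $s$-irreducibility of $\NF_s(f,G)$ with respect to $G$. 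Hence $f_r=0$.

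For part (3), given $f\in M\setminus\{0\}$, pick any lift $\tilde f\in E$ of $f$ under $ev$, set $s:=\LM(\tilde f)$, and form the signed element $\bar f:=(f,s)$. The same dichotomy applies: either $\bar f$ is $s$-irreducible with respect to $M$ and Definition~\ref{def:signed_stb}(2) supplies $g\in G$ with small cofactor $c$ satisfying $\LM(g)\cdot c=\LM(f)$, or $\bar f$ is $s$-reducible and part (1) supplies the same. In either case $\LM(\poly(g))=\LM(g)$ strictly divides $\LM(f)$, so $f$ is reducible with respect to $G'$. The main obstacle I anticipate is the bookkeeping for small cofactors and signatures: verifying that $c'\cdot c$ is indeed a small cofactor of $\LM(g')$ with $\lambda(c'\cdot c)=\lambda(c')\lambda(c)$, and that the compatibility axiom for the monomial ordering on $E$ correctly propagates the strict inequality $\sig(g)\lambda(c)<s$ through cofactor composition.
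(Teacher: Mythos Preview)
Your proof is correct and follows essentially the same approach as the paper: part (1) via Lemma~\ref{lem:reducible_by_irreducible} combined with Definition~\ref{def:signed_stb}(2) and composition of small cofactors, then parts (2) and (3) by the irreducible/reducible dichotomy reducing to part (1). The only cosmetic difference is in the $(\Rightarrow)$ direction of part (2), where the paper tracks the intermediate values $f_r$ through Algorithm~\ref{alg:sigNF} (each remains in $M$ and dominated by $s$, hence $s$-reducible with respect to $G$ by part (1)), while you argue directly about the final value $\NF_s(f,G)$; both arguments are valid, and your anticipated bookkeeping issue with $\lambda(c'\cdot c)=\lambda(c')\lambda(c)$ is indeed resolved by the small-cofactor hypothesis (the product of lifts is a standard monomial).
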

\begin{rk}
  The unsigned normal form $\NF(\poly(g),G')$ can be zero, even if $g\in G$ is
  irreducible with respect to $G$. Hence, $G'$ is usually not a minimal
  standard basis, and not even interreduced.
\end{rk}
\begin{proof}
  \mbox{}
  \begin{enumerate}
  \item By Lemma~\ref{lem:reducible_by_irreducible}, we can assume that there
    is some signed element $h$ of $M\setminus \{0\}$ such that $\poly(h)$ is
    $\sig(h)$-irreducible with respect to $M$, and there is a small cofactor
    $c$ of $\LM(h)$ such that $\LM(h)\cdot c = \LM(f)$ and $\sig(h)\cdot
    \lambda(c)<s$.

    By Definition~\ref{def:signed_stb}, there is some $g$ in $G$ and a
    small cofactor $b$ of $\LM(g)$ such that $\LM(g)\cdot b=\LM(h)$
    and $\sig(g)\cdot \lambda(b)=\sig(h)$. Since $b\cdot c$ is a
    small cofactor of $\LM(g)$, $\LM(g)\cdot b\cdot c=\LM(h)\cdot
    c=\LM(f)$ and $\sig(g)\cdot \lambda(b\cdot c)=\sig(h)\cdot c<s$,
    we find that $f$ is $s$-reducible with respect to $G$.
  \item If $\NF_s(f,G)=0$ then $f$ has an $s$-standard representation
    with respect to $G$. Since $G$ is formed by signed elements of
    $M$, we find $f\in M$, and by the definition of an $s$-standard
    representation it is dominated by $s$.

    If $f\in M$ is dominated by $s$, then the first statement of the
    lemma implies that Algorithm~\ref{alg:sigNF} will end only when
    $f_r=0$. Hence, $\NF_s(f,G)=0$.

  \item Let $f\in M\setminus \{0\}$. We have to show that $f$ is
    reducible with respect to $G'$, \emph{i.e.}, there is some $g\in
    G$ and a small cofactor $c$ of $\LM(g)$ such that $\LM(g)\cdot
    c=\LM(f)$. Since the map $ev\co E\to F$ is surjective, there is
    some $s\in \Mon(E)$ such that $(f,s)$ is a signed element.

    If $f$ is $s$-reducibel with respect to $M$, then it is
    $s$-reducible with respect to $G$ by the first part of the lemma,
    and thus $f$ is reducible with respect to $G'$. Otherwise,
    Definition~\ref{def:signed_stb} ensures that $f$ is reducible with
    respect to $G'$. Hence, $G'$ is a standard basis of $M$.
  \end{enumerate}
\end{proof}

\begin{lem}\label{lem:sig_make_interreduced}
  Let $G$ be a finite signed subset of $M\setminus \{0\}$, $f\in F$ and $s\in \Mon(E)$.
  \begin{enumerate}
  \item If $f$ is (weakly) $s$-reducible with respect to $G$ then it
    is (weakly) $s$-reducible with respect to
    $
    \tilde G = \left\{\NF\left(g, G\setminus \{g\}\right)\right\} \cup
    G\setminus\{g\}
    $
    for all $g\in G$.
  \item Let $g\in G$ such that there is some $g'\in G$ and a small
    cofactor $c$ of $\LM(g')$ such that $\LT(g)=\LT(g')\cdot c$ and
    $\sig(g) = \sig(g')\cdot \lambda(c)$. $f$ is (weakly)
    $s$-reducible with respect to $G$ if and only if it is (weakly)
    $s$-reducible with respect to $G\setminus \{g\}$.
  \end{enumerate} 
\end{lem}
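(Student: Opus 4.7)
The plan is to handle both parts by case analysis on which element of $G$ witnesses the $s$-reducibility of $f$: the trivial case is when the witness already lies in the modified set, and the non-trivial case is handled by producing an alternative witness, either from the structure of Algorithm~\ref{alg:sigNF} (part 1) or from the hypothesis (part 2). In each non-trivial case, composing cofactors and tracking the effect on signatures will yield a replacement reducer.

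For part (1), suppose $f$ is (weakly) $s$-reducible via some $h\in G$ with small cofactor $d$ of $\LM(h)$, so $\LM(h)\cdot d=\LM(f)$ and $\sig(h)\cdot\lambda(d)<s$ (resp.\@ $\le s$). If $h\neq g$, then $h\in \tilde G$ and we are done. If $h=g$, then either $\NF(g,G\setminus\{g\})=g$ as signed elements, in which case $g\in\tilde G$, or Algorithm~\ref{alg:sigNF} executes at least one iteration at signature $\sig(g)$; its first step uses some $g'\in G\setminus\{g\}\subseteq \tilde G$ together with a small cofactor $c$ of $\LM(g')$ satisfying $\LM(g')\cdot c=\LM(g)$ and $\sig(g')\cdot\lambda(c)<\sig(g)$. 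By the first remark on compositions of small cofactors, $c\cdot d$ is a small cofactor of $\LM(g')$ with $\LM(g')\cdot(c\cdot d)=\LM(g)\cdot d=\LM(f)$, and using $\lambda(c\cdot d)=\lambda(c)\cdot\lambda(d)$ together with right-multiplication compatibility of the ordering on $E$ gives $\sig(g')\cdot\lambda(c\cdot d)=\bigl(\sig(g')\cdot\lambda(c)\bigr)\cdot\lambda(d)<\sig(g)\cdot\lambda(d)\le s$, so $g'$ provides the desired reduction in $\tilde G$.

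For part (2), the implication ``reducible w.r.t.\ $G\setminus\{g\}$ implies reducible w.r.t.\ $G$'' is immediate from $G\setminus\{g\}\subseteq G$. For the converse, assume the witness is $h\in G$ with small cofactor $d$ of $\LM(h)$; the case $h\neq g$ is handled as before. If $h=g$, the hypothesis supplies a (necessarily distinct from $g$, else the claim is trivially false) $g'\in G\setminus\{g\}$ together with a small cofactor $c$ of $\LM(g')$ with $\LM(g)=\LM(g')\cdot c$ and $\sig(g)=\sig(g')\cdot\lambda(c)$. Then $c\cdot d$ is a small cofactor of $\LM(g')$, $\LM(g')\cdot(c\cdot d)=\LM(g)\cdot d=\LM(f)$, and $\sig(g')\cdot\lambda(c\cdot d)=\sig(g)\cdot\lambda(d)<s$ (resp.\@ $\le s$), so $g'$ witnesses $s$-reducibility with respect to $G\setminus\{g\}$.

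The main subtlety will be the signature arithmetic in the non-trivial cases: one needs $\lambda(c\cdot d)=\lambda(c)\cdot\lambda(d)$ in $\mathcal P$ (which holds because $c\cdot d$ being a small cofactor of $\LM(g')$ forces $\lambda(\LM(g'))\cdot\lambda(c)\cdot\lambda(d)$ to be a standard monomial, so its right factor $\lambda(c)\cdot\lambda(d)$ is standard too), and that strict inequalities in the ordering on $\Mon(E)$ are preserved under right multiplication by monomials of $\mathcal P$. Both are encoded in the compatibility axioms already in place, so once these points are verified the remainder of the proof is bookkeeping analogous to Lemma~\ref{lem:make_interreduced}.
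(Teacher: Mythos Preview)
Your proof is correct and follows essentially the same approach as the paper: case split on whether the witness lies in the modified set, and in the non-trivial case compose the cofactor with one obtained from the first reduction step (part~1) or from the hypothesis (part~2), then track the signature inequality through. You are actually slightly more careful than the paper in two places --- you note that $g'\neq g$ is needed in part~(2), and you justify $\lambda(c\cdot d)=\lambda(c)\cdot\lambda(d)$ via standardness of right factors, whereas the paper uses this identity tacitly.
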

\begin{proof}
  \mbox{}
  \begin{enumerate}
  \item If $f$ is $s$-reducible (resp.\@ weakly $s$-reducible) with
    respect to $G\setminus \{g\}$ or if $\NF\left(g, G\setminus
      \{g\}\right) = g$, then there is nothing to show.

    Otherwise, there is a small cofactor $c$ of $\LM(g)$, such that
    $\LM(g)\cdot c = \LM(f)$ and $\sig(g)\cdot \lambda(c) < s$
    (resp.\@ $\sig(g)\cdot \lambda(c) \le s$), and there is some
    $g'\in G\setminus \{g\}$ and a small cofactor $c'$ of $\LM(g')$
    such that $\LM(g')\cdot c' = \LM(g)$ and $\sig(g')\cdot
    \lambda(c') < \sig(g)$.

    The monomial $c'\cdot c$ is a small cofactor of $\LM(g')$. Since
    $\sig(g')\cdot \lambda(c'\cdot c) = (\sig(g')\cdot
    \lambda(c'))\cdot \lambda(c) < \sig(g)\cdot \lambda(c)< s$
    (resp.\@ $...\le s$), we find that $f$ is (weakly) $s$-reducible
    with respect to $\{g'\}$ and thus with respect to $\tilde G$.
  \item If $f$ is (weakly) $s$-reducible with respect to $G\setminus \{g\}$
    then it is (weakly) $s$-reducible with respect to $G$.

    If $f$ is (weakly) $s$-reducible with respect to $\{h\}$ for some
    $h\in G\setminus \{g\}$ then $f$ is (weakly) $s$-reducible with
    respect to $G\setminus \{g\}$. Otherwise, if $f$ is (weakly)
    $s$-reducible with respect to $G$ then there is a small cofactor
    $d$ of $\LM(g)$ such that $\LM(g)\cdot d=\LM(f)$ and $\sig(g)\cdot
    \lambda(d) < s$ (or $...\le s$). It follows that $c\cdot d$ is a
    small cofactor of $\LM(g')$, and $\LM(g')\cdot (c\cdot d)=\LM(f)$
    and $\sig(g')\cdot \lambda(c\cdot d)<\sig(g)\cdot
    \lambda(d)<s$ (or $...\le s$). Hence, $f$ is (weakly)
    $s$-reducible with respect to $\{g'\}$ and thus with respect to
    $G\setminus \{g\}$.
  \end{enumerate}
\end{proof}

\begin{cor}\label{cor:sig_interred}
  For any finite signed subset $G$ of $M\setminus\{0\}$, there is an
  \emph{interreduced} finite signed subset $\interred(G)$ of
  $M\setminus\{0\}$ such that if $f$ is (weakly) $s$-reducible with
  respect to $G$ then $f$ is (weakly) $s$-reducible with respect to
  $\interred(G)$, for all $f\in F$ and $s\in \Mon(E)$.
\end{cor}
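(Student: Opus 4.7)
The plan is to iterate a two-step reduction process that parallels the unsigned case (Corollary~\ref{cor:interred}) but now simultaneously addresses both conditions in Definition~\ref{def:signed_stb}(1): irreducibility of each element with respect to the set, and absence of a distinct element producing the same leading term and signature via a small cofactor.

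Concretely, I would repeatedly apply whichever of the following two operations is available. Operation (A): if some $g\in G$ is $\sig(g)$-reducible with respect to $G\setminus\{g\}$, replace $g$ by $g' = \NF(g, G\setminus\{g\})$, discarding $g$ altogether when $\poly(g')=0$. By part~1 of Lemma~\ref{lem:sig_make_interreduced} every (weak) $s$-reducibility with respect to $G$ is preserved under the replacement; inspection of its proof shows that the conclusion holds verbatim when $\poly(g')=0$, since the reducing element is found in $G\setminus\{g\}$ itself. Operation (B): if there exist distinct $g,g'\in G$ and a small cofactor $c$ of $\LM(g')$ with $\LT(g)=\LT(g')\cdot c$ and $\sig(g)=\sig(g')\cdot \lambda(c)$, remove $g$ from $G$; part~2 of Lemma~\ref{lem:sig_make_interreduced} again preserves (weak) $s$-reducibility. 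When neither operation applies, $G$ is interreduced in the sense of Definition~\ref{def:signed_stb}, and induction on the number of applied operations gives the desired preservation of reducibility with respect to the original $G$.

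The main technical point is termination, because we do not assume $\mathcal A$ finite dimensional, so the easy counting argument of Corollary~\ref{cor:interred} is not directly available. I would order finite subsets of $\Mon(F)$ by the Dershowitz--Manna multiset extension of the monomial ordering on $F$. Operation (A) removes $\LM(g)$ from the multiset and inserts at most one strictly smaller monomial, while operation (B) simply removes $\LM(g)$, so each step strictly decreases this multiset. Since the monomial ordering on $\mathcal A$ is admissible and $F$ has finite rank, the induced ordering on $\Mon(F)$ has no infinite strictly decreasing sequences, and its multiset extension is well-founded; hence the procedure terminates and produces the required interreduced set $\interred(G)$.
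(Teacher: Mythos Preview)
Your approach mirrors the paper's proof: iterate the two reductions supplied by Lemma~\ref{lem:sig_make_interreduced} until neither applies, and observe that each step strictly decreases the multiset of leading monomials. Your termination argument via the Dershowitz--Manna multiset extension is more careful than the paper's somewhat terse ``the leading monomials strictly decrease and we only have finitely many monomials'', but the underlying idea is identical.

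There is one small wrinkle in your Operation~(B). You require $\LT(g)=\LT(g')\cdot c$, following the literal hypothesis of Lemma~\ref{lem:sig_make_interreduced}(2), whereas the second clause of Definition~\ref{def:signed_stb}(1) only involves $\LM$. If the leading \emph{coefficients} of $g$ and $g'\cdot c$ happen to differ, then $G$ fails to be interreduced, yet neither of your operations applies (Operation~(A) does not fire because $\sig(g')\cdot\lambda(c)=\sig(g)$ is not strictly smaller). The fix is immediate: the proof of Lemma~\ref{lem:sig_make_interreduced}(2) uses only leading monomials, never leading coefficients, so you may safely weaken the hypothesis of Operation~(B) to $\LM(g)=\LM(g')\cdot c$---which is exactly what the paper's own proof of this corollary does when it invokes ``the preceding lemma''.
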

\begin{proof}
  If there is some $g\in G$ such that $g'=\NF(g,G\setminus \{g\})\not=g$ and
  $\LM(g')<\LM(g)$, then we replace $G$ by $\{g'\}\cup G\setminus \{g\}$. By
  the preceding Lemma, the change of $G$ does not decrease the set of elements
  of $F$ that are (weakly) $s$-reducible with respect to $G$, for all $s\in
  \Mon(E)$.

  If there is some $g,g'\in G$ and a small cofactor $c$ of $\LM(g')$ such that
  $\LM(g')\cdot c=\LM(g)$ and $\sig(g')\cdot \lambda(c)=\sig(g)$, then we
  replace $G$ by $G\setminus \{g\}$. By the preceding lemma, the set of
  elements of $F$ that are (weakly) $s$-reducible with respect to $G$ does not
  change, for all $s\in \Mon(E)$.

  Since the leading monomials strictly decrease and we only have finitely many
  monomials occurring in $G$, we obtain $\interred(G)$ after finitely many
  steps.
\end{proof}

\begin{lem}\label{lem:LMdistinct}
  Let $G$ be an interreduced finite signed subset of $M$.  If $g_1,g_2\in G$,
  $g_1\not=g_2$, and $c_1,c_2$ are small cofactors of $\LM(g_1),\LM(g_2)$ such
  that $g_i\cdot c_i$ is $\sig(g_i)\cdot \lambda(c_i)$-irreducible with
  respect to $G$, for $i=1,2$, then $\LM(g_1)\cdot
  c_1\not=\LM(g_2)\cdot c_2$.
\end{lem}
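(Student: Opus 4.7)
The plan is to proceed by contradiction, assuming $\LM(g_1)\cdot c_1 = \LM(g_2)\cdot c_2$, and first to reduce to the case where the two signatures agree, i.e., $\sig(g_1)\cdot \lambda(c_1) = \sig(g_2)\cdot \lambda(c_2)$. If $\sig(g_1)\cdot \lambda(c_1) < \sig(g_2)\cdot \lambda(c_2)$, then Lemma~\ref{lem:SmallCofactor} gives $\LM(g_2\cdot c_2) = \LM(g_2)\cdot c_2 = \LM(g_1)\cdot c_1$, so the pair $(g_1,c_1)$ witnesses that $g_2\cdot c_2$ is $\sig(g_2)\cdot \lambda(c_2)$-reducible with respect to $G$, contradicting the hypothesis. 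The opposite strict inequality is handled symmetrically.

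Next, I would pass to $\mathcal P$ to exploit the path structure. Since each $c_i$ is a small cofactor of $\LM(g_i)$, the product $\lambda(\LM(g_i))\cdot \lambda(c_i)$ is the unique standard lift of $\LM(g_i)\cdot c_i$, so the two lifts coincide as paths in $Q$. Up to relabeling $g_1, g_2$, this forces $\lambda(\LM(g_1)) = \lambda(\LM(g_2))\cdot d$ for some path $d$, and correspondingly $\lambda(c_2) = d\cdot \lambda(c_1)$. A short argument shows that $d$ is itself a standard path: otherwise $d = \LM(p)$ for some $p\in\ker(\psi)$, whence $\lambda(\LM(g_2))\cdot p\in\ker(\psi)$ would have leading monomial $\lambda(\LM(g_2))\cdot d = \lambda(\LM(g_1))$ by left-compatibility of the monomial ordering, contradicting standardness of $\lambda(\LM(g_1))$. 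Hence $\bar d := \psi(d) \in \mathcal B_>(\mathcal A)$ with $\lambda(\bar d) = d$.

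Finally, I would verify that $\bar d$ is a small cofactor of $\LM(g_2)$: since $\LM(g_2)\cdot \bar d = \LM(g_1) \neq 0$ and $\LM(g_2)\strdiv \LM(g_1)$, no toppling occurs. Right-cancelling $\lambda(c_1)$ in the identity $\sig(g_1)\cdot \lambda(c_1) = \sig(g_2)\cdot \lambda(\bar d)\cdot \lambda(c_1)$ in the free module $E$ yields $\sig(g_1) = \sig(g_2)\cdot \lambda(\bar d)$. Setting $g = g_1$, $g' = g_2$, $c = \bar d$ in Definition~\ref{def:signed_stb} then exhibits the configuration forbidden by interreducedness of $G$, and $g_1 \neq g_2$ keeps this non-trivial even in the degenerate case $d = 1_v$ (where $\LM(g_1) = \LM(g_2)$ and $\sig(g_1) = \sig(g_2)$ coincide for distinct $g_1, g_2 \in G$).

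The main obstacle will be the bookkeeping needed to transport the signature identity across the lift to $\mathcal P$: establishing that $d$ is standard, and that right-cancellation by $\lambda(c_1)$ in $E$ is legitimate, so that $\bar d$ genuinely plays the role of a cofactor witnessing the configuration ruled out by interreducedness.
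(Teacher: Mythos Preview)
Your argument is correct and essentially the same as the paper's: both factor the common path to write one leading monomial as the other times a small cofactor, and then combine interreducedness with the irreducibility hypothesis to reach a contradiction. The only cosmetic difference is the order---you first use irreducibility to force $\sig(g_1)\cdot\lambda(c_1)=\sig(g_2)\cdot\lambda(c_2)$ and then violate interreducedness, whereas the paper first uses interreducedness to force the two signatures to differ and then violates irreducibility; your version is also more explicit about why the intermediate factor $d$ is a standard monomial, a point the paper leaves implicit.
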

\begin{proof}
  To obtain a contradiction, we assume that $g_1,g_2,c_1,c_2$ satisfy the
  hypothesis of the lemma, but $\LM(g_1)\cdot c_1=\LM(g_2)\cdot c_2$.

  Without loss of generality, let $\deg(\LM(g_1))\le \deg(\LM(g_2))$. Since
  the monomial $\LM(g_1)\cdot c_1=\LM(g_2)\cdot c_2$ corresponds to a unique
  path in the quiver, we can write $c_1=c_1'\cdot c_2$ with a small cofactor
  $c_1'$ of $\LM(g_1)$ such that $\LM(g_2)=\LM(g_1)\cdot c_1'$. Since $G$ is
  interreduced, we have $\sig(g_2)\not= \sig(g_1)\cdot
  \lambda(c_1')$. Hence, $\sig(g_2)\cdot \lambda(c_2)\not= \sig(g_1)\cdot
  \lambda(c_1')\cdot \lambda(c_2) = \sig(g_1)\cdot \lambda(c_1)$. Since
  $\LM(g_1\cdot c_1)=\LM(g_1)\cdot c_1 = \LM(g_2)\cdot c_2 = \LM(g_2\cdot
  c_2)$, it follows that either $g_1\cdot c_1$ is
  $\sig(g_1)\cdot \lambda(c_1)$-reducible or $g_2\cdot c_2$ is $\sig(g_2)\cdot
  \lambda(c_2)$-reducible with respect to $G$, which is impossible by
  construction.
\end{proof}

\begin{defn}
  Let $G$ be a finite signed subset of $M\setminus \{0\}$ and $g\in
  G$.
  \begin{enumerate}
  \item If $c$ is the cofactor of a minimal toppling of $\LM(g)$, we
    call $(g,c)$ a \emph{critical pair of type T with cofactor $c$} of $G$.
  \item If $p=(g,c)$ is a critical pair of type T of $G$, we define
    the \emph{S-polynomial} of $p$ as $\mathcal S(p)=\left(\poly(g)\cdot c,
    \sigma(g)\cdot \lambda(c)\right)$.
  \item If $c$ is a small cofactor of $\LM(g)$ and there is some
    $g'\in G$ such that $\LM(g')=\LM(g)\cdot c$ and
    $\sig(g')<\sig(g)\cdot \lambda(c)$, then we call $(g,g')$ a
    \emph{critical pair of type S with cofactor $c$} of $G$.
  \item If $p=(g,g')$ is a critical pair of type S with cofactor $c$ of $G$,
    we define the \emph{S-polynomial} of $p$ as
    $$ \mathcal S(p) = \left(\poly(g)\cdot c - \frac{\LC(g')}{\LC(g)}g', \sig(g)\cdot \lambda(c)\right)$$
  \end{enumerate}
\end{defn}

When we do not name the type of a critical pair, it can be either
type. By construction, if $p$ is a critical pair of $G$, then the
S-polynomial $S(p)$ is a signed element. Note that for type S, the
\emph{unsigned} element $\poly(g')$ is reducible with respect to
$\{\poly(g)\}$, but the signed element $g'$ is \emph{not} reducible
with respect to $\{g\}$, because its signature is too small. This is
why we need to take into account critical pairs of type S. Indeed, in
the unsigned case, the critical pairs of type T would actually be
enough to construct standard bases; see Theorem~\ref{thm:criterion}
and~\cite{Green:Habil}.


\begin{defn}
  Let $G$ be a finite signed subset of $M\setminus \{0\}$.
  \begin{enumerate}
  \item A critical pair $(g,c)$ of $G$ of type T is \emph{normal}, if $g$ is
    irreducible with respect to $G$, and $\sig(g)\cdot \lambda(c)$ is not
    leading monomial of an element of $\ker(ev)$.
  \item A critical pair $(g,g')$ of type S with cofactor $c$ of $G$ is
    \emph{normal}, if both $g$ and $g'$ are irreducible with respect to $G$,
    and $\sig(g)\cdot \lambda(c)$ is not leading monomial of an element of
    $\ker(ev)$.
  \end{enumerate}
\end{defn}

\begin{defn}
  A finite signed subset $G$ of $M\setminus \{0\}$ satisfies the \emph{\FF
    criterion}, if for each normal critical pair $p$ of $G$, there is some
  $g\in G$ and a small cofactor $c$ of $\LM(g)$ such that $\sig(g)\cdot
  \lambda(c) = \sig\left(\mathcal S(p)\right)$ and $\poly(g)\cdot c$ is
  $\sig\left(\mathcal S(p)\right)$-irreducible with respect to $G$.
\end{defn}

\begin{lem}\label{lem:key}
  Let $G$ be a finite signed subset of $M\setminus \{0\}$ that
  satisfies the \FF criterion and is interreduced.
  Let $\tau$ be a monomial of $E$ that is not leading monomial of an
  element of $\ker(ev)$.

  Assume that there is some $g \in G$ and some monomial $b\in \mathcal
  B_>(\mathcal A)$ such that $\sig(g)\cdot \lambda(b)=\tau$. One can
  choose $g$ and $b$ such that $b$ is a small cofactor of $\LM(g)$ and
  $\poly(g)\cdot b$ is $\tau$-irreducible with respect to $G$.
\end{lem}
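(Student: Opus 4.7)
My plan is strong induction on $\tau$ in the admissible monomial ordering on $\Mon(E)$: assuming the conclusion holds for every $\tau' < \tau$ that is not a leading monomial of $\ker(ev)$ and for which $P(\tau') := \{(g,b) \in G \times \mathcal{B}_>(\mathcal{A}) : \sig(g)\cdot\lambda(b) = \tau'\}$ is nonempty, I would prove it for $\tau$. For the induction step, I would pick $(g,b) \in P(\tau)$ minimizing $\LM(\poly(g)\cdot b)$ (such a minimum exists by admissibility, since $G$ is finite and for each fixed $g$ the relevant leading monomials are bounded below) and show that it satisfies both (a) and (b).

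For (a), suppose $b$ is not a small cofactor of $\LM(g)$; then $b$ itself is the cofactor of some toppling, so among strict divisors of $b$ that are cofactors of topplings of $\LM(g)$, I can select $c$ to be the cofactor of a \emph{minimal} toppling. Prefixes of standard monomials of $\mathcal{P}$ are themselves standard (if $v\,|\,u$ with $u\in\stdmon$ and $v=\LM(k)$ for $k\in\ker(\psi)$, then writing $u=v\cdot w$ gives $k\cdot w\in\ker(\psi)$ with leading monomial $u$, contradicting $u\in\stdmon$), so the relation $\lambda(c)\divides\lambda(b)$ produces $c'\in\mathcal{B}_>(\mathcal{A})$ with $\lambda(b)=\lambda(c)\cdot\lambda(c')$, hence $b=c\cdot c'$ in the preferred basis. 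Because $G$ is interreduced, $g$ is irreducible with respect to $G$; and since $\ker(ev)$ is a right $\mathcal{P}$-submodule of $E$ and $\tau$ is not a leading monomial of $\ker(ev)$, neither is $\sig(g)\cdot\lambda(c)$ (otherwise multiplying a witness on the right by $\lambda(c')$ would make $\tau$ one too). Thus $(g,c)$ is a normal critical pair of type T, and the \FF criterion supplies $h\in G$ together with a small cofactor $d$ of $\LM(h)$ such that $\sig(h)\cdot\lambda(d)=\sig(g)\cdot\lambda(c)$ and $\poly(h)\cdot d$ is $\sig(g)\cdot\lambda(c)$-irreducible. The same right-multiplication argument forces $\lambda(d)\cdot\lambda(c')\in\stdmon_{\mathcal{A}}(\mathcal{P})$, so $b_h:=\psi(\lambda(d)\cdot\lambda(c'))\in\mathcal{B}_>(\mathcal{A})$ and $(h,b_h)\in P(\tau)$. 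To conclude the step I must show $\LM(\poly(h)\cdot b_h)<\LM(\poly(g)\cdot b)$, contradicting minimality and thereby forcing $b$ to be a small cofactor.

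For (b), (a) together with Lemma~\ref{lem:SmallCofactor} gives $\LM(\poly(g)\cdot b)=\LM(g)\cdot b$. If $\poly(g)\cdot b$ were $\tau$-reducible via $(k,e)$ with $\tau':=\sig(k)\cdot\lambda(e)<\tau$ and $\LM(k)\cdot e=\LM(g)\cdot b$, then $\tau'$ is not a leading monomial of $\ker(ev)$, so the inductive hypothesis supplies $(k^*,e^*)\in P(\tau')$ satisfying (a) and (b) at $\tau'$. Left-cancellation in the path algebra forces $g\neq k^*$ (since otherwise $\lambda(\LM(g))\cdot\lambda(b)=\lambda(\LM(k^*))\cdot\lambda(e^*)$ would yield $b=e^*$, hence $\tau=\tau'$), and then Lemma~\ref{lem:LMdistinct} applied to the distinct irreducible pairs $(g,b)$ and $(k^*,e^*)$ — assuming $(g,b)$ is itself $\tau$-irreducible, which we may also secure by an independent reduction — gives $\LM(g)\cdot b\neq\LM(k^*)\cdot e^*$; performing the standard leading-term cancellation between $(g,b)$ and $(k^*,e^*)$ (suitably extended, with $\tau$-signature preserved because $\tau'<\tau$) then produces a pair in $P(\tau)$ with strictly smaller leading monomial, again contradicting minimality.

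The main obstacle is the inequality $\LM(\poly(h)\cdot b_h)<\LM(\poly(g)\cdot b)$ in Step~(a), as the \FF criterion guarantees only $\sig(g)\cdot\lambda(c)$-irreducibility of $\poly(h)\cdot d$ and not a direct leading-monomial comparison with $\poly(g)\cdot c$. I expect to handle this by carefully exploiting the toppling structure of $\LM(g)\cdot c$ (which strictly lowers the leading monomial from the naive product) together with Lemma~\ref{lem:SmallCofactor} applied to how right-multiplication by $c'$ interacts with each of $\poly(h)\cdot d$ and $\poly(g)\cdot c$, possibly via a nested descent on the toppling decomposition of $b$ using Remark~\ref{rk:decompose}.
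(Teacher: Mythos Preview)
Your plan has two genuine gaps, and I do not see how to close them with the tools you invoke.

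In step~(a), the inequality $\LM(\poly(h)\cdot b_h) < \LM(\poly(g)\cdot b)$ that you flag as the main obstacle is not merely hard: there is no reason it should hold. The \FF criterion, applied to the type-T pair $(g,c)$, guarantees only that $\poly(h)\cdot d$ is $\sig(g)\cdot\lambda(c)$-irreducible; it says nothing about how $\LM(h)\cdot d$ compares with $\LM(\poly(g)\cdot c)$, and after right-multiplying by $c'$ you have even less control (for instance $c'$ need not be a small cofactor of $\LM(h)\cdot d$, so Lemma~\ref{lem:SmallCofactor} is unavailable). The structural problem is that your minimal toppling $c$ is in general a \emph{proper} prefix of $b$, so you are invoking the \FF criterion at an intermediate signature $\sig(g)\cdot\lambda(c)<\tau$ and then trying to lift the conclusion to $\tau$; the lift destroys exactly the irreducibility you obtained. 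In step~(b), your appeal to Lemma~\ref{lem:LMdistinct} is circular: that lemma requires both pairs to be irreducible at their respective signatures, and $\tau$-irreducibility of $(g,b)$ is what you are proving. Moreover, the ``leading-term cancellation'' you perform yields a signed element of $M$, not a pair in $P(\tau)\subset G\times\mathcal B_>(\mathcal A)$, so it cannot contradict minimality over $P(\tau)$. You also claim that $\tau'=\sig(k)\cdot\lambda(e)$ is not a leading monomial of $\ker(ev)$, but unlike $\sig(g)\cdot\lambda(c)$ in step~(a), this $\tau'$ need not divide $\tau$, so the right-ideal argument does not apply.

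The paper sidesteps all of this by inducting on $\deg(b)$ rather than on $\tau$. Writing $b=b'\cdot x$ with $\deg(x)=1$, the inductive hypothesis at $\tau'=\sig(g)\cdot\lambda(b')$ (which \emph{does} divide $\tau$) produces a good pair $(g_0,c_0')$ at $\tau'$. One then extends by the single letter $x$. If $x$ topples $\LM(g_0)\cdot c_0'$, then $c_0=c_0'\cdot x$ is the cofactor of a \emph{minimal} toppling of $\LM(g_0)$ (because $c_0'$ is small and $x$ has degree one), so $(g_0,c_0)$ is a normal type-T critical pair whose S-polynomial has signature \emph{equal to} $\tau$; the \FF criterion then directly furnishes the desired pair at $\tau$, with no lifting. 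If $x$ is small, $c_0=c_0'\cdot x$ is a small cofactor and one tests $\tau$-irreducibility of $\poly(g_0)\cdot c_0$; if it fails via some $(h,d)$, the $\tau'$-irreducibility of $\poly(g_0)\cdot c_0'$ forces $\deg(d)=0$ (peel $x$ off the unique path), giving a normal type-S critical pair $(g_0,h)$ at signature $\tau$, and again the \FF criterion applies directly. The one-letter step is precisely what guarantees that the critical pair lands at the target signature rather than below it.
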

\begin{proof}
  The proof is by induction on $\deg (b)$.

  If there are $g$ and $b$ such that $\deg(b)=0$, then $\poly(g)\cdot
  b=\poly(g)$ and $\sig(g)\cdot \lambda(b)=\sig(g) =\tau$. Since $G$
  is interreduced, $\poly(g)$ is $\tau$-irreducible with respect to
  $G$.

  By now, let $\deg (b)>0$, so that we can write $b=b'\cdot x$ for
  some $x\in \mathcal B_>(\mathcal A)$ with $\deg(x)=1$. Let
  $\tau'=\sigma\cdot \lambda(b')$. Since $\tau=\tau'\cdot
  \lambda(x)$ is not leading monomial of an element of $\ker(ev)$,
  its divisor $\tau'$ isn't either. Thus, by induction, there is some
  $g_0\in G$ and a small cofactor $c_0'$ of $\LM(g_0)$ such that
  $\sig(g_0)\cdot \lambda(c_0')=\tau'$ and $\poly(g_0)\cdot c_0'$ is
  $\tau'$-irreducible with respect to $G$. Either $x$ is a toppling of
  $\LM(g_0)\cdot c_0'$, or it is a small cofactor of $\LM(g_0)\cdot
  c_0'$.

  Assume that $x$ is the cofactor of a toppling of $\LM(g_0)\cdot
  c_0'$. Since $x$ is of degree one, the toppling is minimal. Since
  $c_0'$ is a small cofactor of $\LM(g_0)$, it follows that
  $c_0=c_0'\cdot x$ is a minimal toppling of $\LM(g_0)$. Since $G$ is
  interreduced, $\poly(g_0)$ is $\sig(g_0)$-irreducible with respect
  to $G$. Moreover, $\sig(g_0)\cdot \lambda(c_0)=\tau$ is not the
  leading monomial of an element of $\ker(ev)$. Hence, $(g_0,c_0)$ is
  a normal critical pair of $G$ of type T.

  By the \FF criterion, there is some $g_1\in G$ and a small cofactor
  $c_1$ of $\LM(g_1)$ such that $\sig(g_1)\cdot
  \lambda(c_1)=\sig(\mathcal S(g_0,c_0))$ and $\poly(g_1)\cdot c_1$
  is $\sig(\mathcal S(g_0,c_0))$-irreducible with respect to
  $G$. Since $\sig(\mathcal S(g_0,c_0))=\sig(g_0)\cdot
  \lambda(c_0'\cdot x)=\tau$, the statement of the lemma holds in
  this case.

  There remains to study the case that $x$ is 
  a small cofactor of $\LM(g_0)\cdot c_0'$, which implies that $c_0 =
  c_0'\cdot x$ is a small cofactor of $\LM(g_0)$. We have
  $\sig(g_0)\cdot \lambda(c_0)=\tau$.

  Assume that $\poly(g_0)\cdot c_0$ is $\tau$-reducible with respect
  to $G$. That means, there is some $h\in G$ and a small cofactor $d$
  of $\LM(h)$ such that $\LM(h)\cdot d=\LM(g_0)\cdot c_0$ and
  $\sig(h)\cdot \lambda(d)<\tau$.
  The monomial $\LM(h)\cdot d=\LM(g_0)\cdot c_0=\LM(g_0)\cdot
  c_0'\cdot x$ corresponds to a unique path in our quiver $Q$. That
  path ends with the arrow that corresponds to $x$.

  Assume that $\deg (d)>0$. It follows that $d$ can be written as
  $d=d'\cdot x$ for some $d'\in \mathcal B_>(\mathcal A)$. Then,
  $\LM(h)\cdot d'=\LM(g_0)\cdot c_0'$. But $\poly(g_0)\cdot c_0'$ is
  $\tau'$-irreducible with respect to $G$. Hence, $\sig(h)\cdot
  \lambda(d')\ge \tau'$. It follows $\sig(h)\cdot \lambda(d)\ge
  \tau'\cdot \lambda(x)=\tau$. This is a contradiction and implies
  $\deg(d)=0$.

  Hence, we have some $h\in G$ such that $\LM(g_0)\cdot c_0=\LM(h)$
  and $\sig(h)<\tau$. Thus, $(g_0,h)$ is a critical pair of $G$ of
  type S with cofactor $c_0$. Both $g_0$ and $h$ are irreducible with
  respect to $G$, since $G$ is interreduced. Moreover, $\sig(g_0)\cdot
  \lambda(c_0)=\tau$ is not leading monomial of an element of
  $\ker(ev)$. Hence, $(g_0,h)$ is a \emph{normal} critical pair. We have
  $\sig(\mathcal S(g_0,h))=\tau$. Hence, by the \FF criterion, there
  is some $\tilde g\in G$ and a small cofactor $\tilde c$ of
  $\LM(\tilde g)$ such that $\sig(\tilde g)\cdot \lambda(\tilde
  c)=\tau$ and $\poly(\tilde g)\cdot \tilde c$ is $\tau$-irreducible
  with respect to $G$.
\end{proof}

Recall that we have provided our free module $E$ with generators
$\mathfrak e_1,...,\mathfrak e_m$ that are mapped by $ev$ to the
originally given generators $\hat g_1,...\hat g_m$ of $M$. We now come
to the main theorem of this section.
\begin{thm}\label{thm:sigBuchberger}
  Let $G$ be an interreduced finite signed subset of $M\setminus
  \{0\}$. Assume that for each $i=1,...,m$ so that $\mathfrak e_i$ is
  not leading monomial of an element of $\ker(ev)$, there is some
  $g\in G$ with $\sig(g)=\mathfrak e_i$. Then $G$ is a signed standard
  basis of $M$ if and only if $G$ satisfies the \FF criterion.
\end{thm}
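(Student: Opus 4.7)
The plan is to prove both directions. For ($\Rightarrow$), given a normal critical pair $p$ of $G$ with signature $\tau_p=\sig(\mathcal{S}(p))$, I would iteratively $\tau_p$-reduce $\mathcal{S}(p)$ by signed elements of $M$. Each reduction strictly decreases the leading monomial in $F$ while preserving the signature, so by admissibility of the ordering on $\mathcal{A}$ the process terminates. It cannot terminate at $0$: that would express $\mathcal{S}(p)$ as a sum whose lifts to $E$ all have leading monomial strictly below $\tau_p$, forcing $\tau_p$ to be the leading monomial of an element of $\ker(ev)$ and contradicting normality. The resulting $(q,\tau_p)$ with $q\neq 0$ is $\tau_p$-irreducible with respect to $M$, so by the signed standard basis property there exist $g\in G$ and a small cofactor $c$ of $\LM(g)$ with $\LM(g)\cdot c=\LM(q)$ and $\sig(g)\cdot\lambda(c)\leq\tau_p$. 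Strict inequality would exhibit a $\tau_p$-reduction of $q$ in $M$, contradicting irreducibility; for the same reason $\poly(g)\cdot c$ must be $\tau_p$-irreducible with respect to $G$. Thus $g,c$ witness the \FF criterion for $p$.

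For ($\Leftarrow$), I would argue by minimal counterexample: assume $G$ satisfies the \FF criterion but fails to be a signed standard basis and pick a $\tau$-irreducible signed element $(f,\tau)$ of $M\setminus\{0\}$ not weakly $\tau$-reducible with respect to $G$, with $\tau$ minimal in the well-ordering on $\Mon(E)$. A first step shows $\tau\notin\LM(\ker(ev))$: otherwise shifting a lift of $f$ by an appropriate multiple of an element of $\ker(ev)$ of leading monomial $\tau$ produces a signed element $(\poly(f),s)$ with $s<\tau$, which $\tau$-reduces $f$ using the vertex idempotent at the endpoint of $\LM(f)$ as cofactor, contradicting irreducibility. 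Writing $\tau=\mathfrak{e}_i\cdot\tilde c$, one then checks that $\tilde c\in\stdmon_{\mathcal{A}}(\mathcal{P})$ and $\mathfrak{e}_i\notin\LM(\ker(ev))$, both forced by $\tau\notin\LM(\ker(ev))$. The hypothesis supplies $g_0\in G$ with $\sig(g_0)=\mathfrak{e}_i$; setting $b_0=\psi(\tilde c)$ gives $\sig(g_0)\cdot\lambda(b_0)=\tau$, so Lemma~\ref{lem:key} applies and yields $g\in G$ and a small cofactor $b$ of $\LM(g)$ with $\sig(g)\cdot\lambda(b)=\tau$ and $\poly(g)\cdot b$ being $\tau$-irreducible with respect to $G$.

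The final task is to compare $\LM(f)$ with $\LM(g)\cdot b$, and this is the main obstacle. Taking lifts of $(f,\tau)$ and $(g\cdot b,\tau)$ and subtracting, the $E$-leading terms cancel, producing a lift of $\poly(f)-\poly(g)\cdot b$ whose $E$-leading monomial is some $s<\tau$. If $\LM(f)=\LM(g)\cdot b$ then $f$ is weakly $\tau$-reducible by $g$ with cofactor $b$, contradicting the choice of $f$. If $\LM(f)>\LM(g)\cdot b$ then $(\poly(f)-\poly(g)\cdot b,s)$ is a signed element of $M$ with leading monomial $\LM(f)$ and signature $<\tau$, which $\tau$-reduces $f$ and contradicts irreducibility. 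In the hard case $\LM(f)<\LM(g)\cdot b$, the same signed element has leading monomial $\LM(g)\cdot b$ and signature $<\tau$; among all signed elements of $M$ with leading monomial $\LM(g)\cdot b$ and signature $<\tau$, I would take one $(p^*,\sigma^*)$ of minimum signature. Any $\sigma^*$-reduction of $(p^*,\sigma^*)$ in $M$ via some $(u,c_u)$ would produce the signed element $u\cdot c_u$ with leading monomial $\LM(g)\cdot b$ but signature strictly below $\sigma^*$, contradicting minimality; hence $(p^*,\sigma^*)$ is $\sigma^*$-irreducible with respect to $M$. Since $\sigma^*<\tau$, minimality of $\tau$ now forces $(p^*,\sigma^*)$ to be weakly $\sigma^*$-reducible with respect to $G$, yielding $g'\in G$ and a small cofactor $c'$ with $\LM(g')\cdot c'=\LM(g)\cdot b$ and $\sig(g')\cdot\lambda(c')\leq\sigma^*<\tau$. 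But this exhibits $\poly(g)\cdot b$ as $\tau$-reducible with respect to $G$, contradicting Lemma~\ref{lem:key}.
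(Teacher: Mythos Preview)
Your proof is correct and, for the converse direction, follows essentially the same route as the paper: a minimal-signature counterexample $(f,\tau)$, the observation that $\tau\notin\LM(\ker(ev))$, an appeal to Lemma~\ref{lem:key} to produce $g\in G$ and a small cofactor $b$ with $\sig(g)\cdot\lambda(b)=\tau$ and $\poly(g)\cdot b$ $\tau$-irreducible, and then the trichotomy on $\LM(f)$ versus $\LM(g)\cdot b$. Your handling of the case $\LM(f)<\LM(g)\cdot b$ via a minimum-signature element $(p^*,\sigma^*)$ is exactly the paper's sub-claim (``any signed element of signature below $\sig(f)$ is weakly reducible with respect to $G$'') inlined into that case.

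Two remarks. First, for the forward direction the paper is extremely terse (it asserts weak reducibility of $\mathcal S(p)$ and stops), whereas you supply the missing argument: pass from $\mathcal S(p)$ to a $\tau_p$-irreducible element $(q,\tau_p)$ by iterated reduction in $M$, rule out $q=0$ using normality, and then use the signed-standard-basis property on $q$. This extra work is genuinely needed, since the \FF criterion demands both the \emph{equality} $\sig(g)\cdot\lambda(c)=\tau_p$ and the $\tau_p$-irreducibility of $\poly(g)\cdot c$, neither of which is immediate from weak reducibility of $\mathcal S(p)$ alone. Second, a small logical ordering issue: you choose $\tau$ ``minimal in the well-ordering on $\Mon(E)$'' and only afterwards argue $\tau\notin\LM(\ker(ev))$. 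Since $\Mon(E)$ need not be well-ordered in general, the safer sequencing (which the paper uses) is to first observe that \emph{every} counterexample signature lies outside $\LM(\ker(ev))$, and then invoke well-foundedness on that subset to select a minimal one.
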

\begin{proof}
  If $G$ is a signed standard basis, then any signed element of
  $M\setminus \{0\}$ is weakly reducible with respect to $G$. In
  particular, this holds for S-polynomials of critical pairs. Hence,
  $G$ satisfies the \FF criterion.

  Now suppose that $G$ satisfies the \FF criterion. We prove by
  contradiction that $G$ is a signed standard basis. Assume that there
  is a signed element $f$ of $M\setminus \{0\}$ so that $\poly(f)$ is
  $\sig(f)$-irreducible with respect to $M\setminus \{0\}$ and is not
  weakly $\sig(f)$-reducible with respect to $G$.

  Since $f$ is a signed element, there is some $\tilde f\in E$ with
  $\LM(\tilde f)=\sig(f)$ and $ev(\tilde f)=\poly(f)$. Assume that
  $\sig(f)$ is the leading monomial of an element $z\in\ker(ev)$. Then
  let $\tilde f'=\tilde f-\frac{\LC(\tilde f)}{\LC(z)}z$. Apparently
  $\LM(\tilde f')<\LM(\tilde f)$, but $ev(\tilde f')=ev(\tilde
  f)=\poly(f)$. Hence, $\poly(f)$ is $\sig(f)$-reducible with respect
  to $M\setminus \{0\}$. This contradiction implies that $\sig(f)$ is
  not the leading monomial of an element of $\ker(ev)$.

  By our assumption on the monomial ordering, any
  descending sequence of monomials of $E$ ends among the leading
  monomials of $\ker(ev)$ after finitely many steps. Hence, since
  $\sig(f)$ is not leading monomial of an element of $\ker(ev)$, we
  can choose $f$ such that $\sig(f)$ is minimal.

  We show: If $f'$ is a (not necessarily irreducible) signed
  element of $M\setminus \{0\}$ with $\sig(f')<\sig(f)$, then $f'$ is weakly
  reducible with respect to $G$. Namely, choose a signed element $f''$ of
  $M\setminus \{0\}$ with $\LM(f'')=\LM(f')$, so that $\sig(f'')$ is
  minimal. Then, $f''$ is irreducible with respect to $M \setminus \{0\}$, and
  $\sig(f'')\le\sig(f')<\sig(f)$. Hence, by the choice of $f$, $f''$ is weakly
  reducible with respect to $G$, and since $\LM(f')=\LM(f'')$ and $\sig(f')\ge
  \sig(f'')$, $f'$ is weakly reducible with respect to $G$ as well.

  Since $\sig(f)$ is not the leading monomial of an element of $\ker(ev)$, we
  can write it as $\sig(f)=\mathfrak e_i\cdot \lambda(c)$ for some monomial
  $c$ of $\mathcal A$. By the hypothesis of this theorem, there is some $g\in
  G$ such that $\sig(g)\cdot \lambda(c)=\sig(f)$. By Lemma~\ref{lem:key}, we
  can choose $g$ and $c$ such that $c$ is a small cofactor of $\LM(g)$ and
  $\poly(g)\cdot c$ is $\sig(f)$-irreducible with respect to $G$.

  Our aim is to show that $\LM(g)\cdot c=\LM(f)$, which means that
  $\poly(f)$ is weakly $\sig(f)$-reducible with respect to $G$ and
  hence finishes the proof.

  Assume that $\LM(g)\cdot c<\LM(f)$. Since $g$ is a signed element,
  there is some $\tilde g\in E$ such that $\LM(\tilde g)=\sig(g)$ and
  $ev(\tilde g)=\poly(g)$. Then, let $\tilde f'= \tilde
  f-\frac{\LC(\tilde f)}{\LC(\tilde g)}\tilde g\cdot \lambda(c)$. By
  assumption, we have $\LM(\tilde f')<\LM(\tilde f)$ and
  $\LM(ev(\tilde f'))=\LM(ev(\tilde f)=\LM(f)$. But that is a
  contradiction to $\poly(f)$ being $\sig(f)$-irreducible with respect
  to $M\setminus \{0\}$.

  Hence, $\LM(g)\cdot c\ge \LM(f)$. Assume that $\LM(g)\cdot c>\LM(f)$. Then,
  let $\tilde g' = \tilde g\cdot \lambda(c)-\frac{\LC(\tilde g)}{\LC(\tilde
    f)}\tilde f$. We have $\LM(\tilde g')<\LM(\tilde g\cdot
  \lambda(c))=\sig(f)$ and $\LM(ev(\tilde g'))=\LM(ev(\tilde g\cdot
  \lambda(c)))=\LM(g\cdot c)$.

  By the same argument as above, since $\sig(f)$ is minimal and since $\tilde
  g'$ yields a signed element of signature $\LM(\tilde g')<\sig(f)$, we obtain
  that $ev(\tilde g')$ is weakly $\LM(\tilde g')$-reducible with respect to
  $G$, and is thus $\sig(f)$-reducible with respect to $G$. But $\LM(ev(\tilde
  g'))=\LM(g\cdot c)$, and thus $g\cdot c$ is $\sig(f)$-reducible with respect
  to $G$. This is a contradiction to the choice of $g$ and $c$.

  To summarise, both $\LM(g)\cdot c<\LM(f)$ and $\LM(g)\cdot c>\LM(f)$
  yield a contradiction. Hence $\LM(g)\cdot c=\LM(f)$, and thus
  $f$ is weakly reducible with respect to $G$.
\end{proof}

\subsection{Computing signed standard bases}
\label{sec:sig_compute}

Theorem~\ref{thm:sigBuchberger} provides a way to compute signed
standard bases --- with the complication that one needed to know
$\ker(ev)$ in advance, in order to decide whether a critical pair is
normal. This is often unfeasible or impossible.
Therefore, we use a weakened version of the \FF criterion. The basic
idea is to use partial knowledge of the leading monomials of
$\ker(ev)$, and increase the partial knowledge on the fly. We have learnt this
idea from~\cite{ArriPerry:F5revised}.

In the rest of this section, let $L$ be a finite set of monomials of
$E$ such that each element of $L$ is leading monomial of some element
of $\ker(ev)$.

\begin{defn}
  A monomial
  $\mathfrak e_i\cdot c$ of $E$ is called \emph{standard relative to
    $L$}, if $c\in \stdmon_{\mathcal A}(\mathcal P)$ and there are no
  monomials $c',b,d$ of $\mathcal P$ with $\mathfrak e_i\cdot c'\in L$ and
  $c=b\cdot c'\cdot d$.
\end{defn}

\begin{rk}
  If $\mathfrak e_i\cdot m$ is not the leading monomial of an element
  of $\ker(ev)$ then it is standard relative to $L$, for any finite
  set $L$ of leading monomials of elements of $\ker(ev)$.
\end{rk}

\begin{defn}
  A critical pair $(g,c)$ of type T (resp.\@ a critical pair $(g,g')$ of type
  S) with cofactor $c$ of $G$ is called \emph{normal relative to $L$}, if
  $\sig(g)\cdot \lambda(c)$ is standard relative to $L$, and $g$ is
  irreducible (resp. both $g$ and $g'$ are irreducible) with respect to $G$.
\end{defn}
\begin{rk}
  If a critical pair of $G$ is normal, then it is normal relative to
  $L$, for any choice of $L$.
\end{rk}

\begin{algorithm}[H]
  \DontPrintSemicolon

  \KwData{A finite ordered subset $\{\hat
    g_1,...,\hat g_d\}$ of $M$, generating $M$ as a right-$\mathcal A$
    module.}

  \KwResult{An interreduced signed standard basis of $M$.}
  \Begin{ 
    $G\longleftarrow \interred\left(\{(\hat g_i, \mathfrak e_i)\co i=1,...,d\}\right)$\;
    $L = \emptyset$\;
    \While{ there is a critical pair $p$ of $G$ that is normal relative to $L$ so that it is impossible to find
      $g\in G$ and a small cofactor $c$ of $\LM(g)$ such that
      $\sig(g)\cdot \lambda(c)=\sig(\mathcal S(p))$ and $\poly(g)\cdot c$ is
      $\sig(\mathcal S(p)$-irreducible with respect to $G$}
    {
      $s\longleftarrow \NF(\mathcal S(p), G)$\;
      \If{$\poly(s)=0$}
      {
        $L\longleftarrow L\cup \sig(s)$\;
      }
      \Else
      {
        $G\longleftarrow \interred\left(G\cup \{s\}\right)$, enlarging $L$ if a
        zero reduction occurs\;
      }
    }
    \Return $G$\;
  }
  \caption{The \FF algorithm, computing signed standard bases}
  \label{alg:F5}
\end{algorithm}

\begin{thm}
  If Algorithm~\ref{alg:F5} terminates, then it returns an interreduced
  signed standard basis of $M$. It terminates in finite time, if and only if
  the while loop is executed only finitely many times.
\end{thm}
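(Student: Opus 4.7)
The plan is to apply Theorem~\ref{thm:sigBuchberger} to the set $G$ returned by Algorithm~\ref{alg:F5}. To do so, I would establish three invariants maintained by the while loop: (a) every monomial added to $L$ is indeed the leading monomial of some element of $\ker(ev)$, (b) $G$ is always interreduced, and (c) for each $i=1,\ldots,d$ such that $\mathfrak e_i$ is not the leading monomial of an element of $\ker(ev)$, some element of $G$ has signature $\mathfrak e_i$.

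Invariant (a) follows directly from the two places where $L$ is enlarged: whenever $s = \NF(\mathcal S(p),G)$ satisfies $\poly(s)=0$, any lift $\tilde s \in E$ witnessing $s$ as a signed element lies in $\ker(ev)$ and has $\LM(\tilde s)=\sig(s)$, and the same reasoning covers zero reductions occurring inside the interreduce step. Invariant (b) holds because every assignment to $G$ is followed by $\interred$. For (c), the initialisation equips each generator $\hat g_i$ with signature $\mathfrak e_i$, and an element of $G$ of signature $\mathfrak e_i$ can only be dropped either via a zero reduction, in which case $\mathfrak e_i$ is added to $L$ and thereby becomes a leading monomial of $\ker(ev)$, so the hypothesis no longer applies to $\mathfrak e_i$, or via the second clause of interreducedness, which requires some $g'\in G$ with a small cofactor $c$ satisfying $\sig(g')\cdot \lambda(c)=\mathfrak e_i$; since $\mathfrak e_i$ is a free generator of $E$, this forces $\lambda(c)$ to be a vertex idempotent and $\sig(g')=\mathfrak e_i$, so an element of signature $\mathfrak e_i$ survives in $G$.

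Upon termination of the while loop, no critical pair of $G$ that is normal relative to $L$ violates the \FF condition. By the Remark following the definition of normality relative to $L$, every normal critical pair is normal relative to $L$, so the \FF criterion is in force for every normal critical pair of $G$. Combined with invariants (b) and (c), all hypotheses of Theorem~\ref{thm:sigBuchberger} are satisfied, and I conclude that $G$ is an interreduced signed standard basis of $M$.

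For the second statement, each iteration of the while loop terminates in finite time: $\NF(\mathcal S(p),G)$ is computed in finitely many steps by the signed analogue of Lemma~\ref{lem:algNF}, enumerating critical pairs of a finite set $G$ is finite, and $\interred$ terminates by Corollary~\ref{cor:sig_interred}. Hence the only possible source of non-termination is the while loop iterating infinitely often, which proves the equivalence. The most delicate point in the whole argument is invariant (c): verifying that the generator signatures cannot silently disappear from $G$ during interreduction, so that the signature hypothesis of Theorem~\ref{thm:sigBuchberger} remains available at the moment of termination.
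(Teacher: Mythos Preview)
Your proof is correct and follows essentially the same approach as the paper: establish that $L$ consists of leading monomials of $\ker(ev)$, that $G$ stays interreduced, and that the signature hypothesis on the $\mathfrak e_i$ is maintained, then invoke Theorem~\ref{thm:sigBuchberger} once the while loop exits. Your treatment of invariant (c) is in fact slightly more careful than the paper's, since you explicitly handle the second clause of interreducedness (removal of an element whose signature and leading monomial are matched by another element with a small cofactor), whereas the paper's proof mentions only the normal-form replacement and zero-reduction cases.
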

\begin{rk}
  The number of normal critical pairs of type T usually is \emph{much} smaller
  than the number of topplings considered in
  Algorithm~\ref{alg:Buchberger}. Hence, the \FF criterion discards many
  topplings whose S-polynomials would reduce to zero.

  One should note, however, that some additional critical pairs need to be
  considered, namely those of type S. However, in practical computations, we
  found that Algorithm~\ref{alg:F5} is a lot more efficient than
  Algorithm~\ref{alg:Buchberger}.
\end{rk}
\begin{proof}
  Let $p$ be as in the while-loop of Algorithm~\ref{alg:F5}. If
  $\NF(\mathcal S(p),G)=0$, then $\sig(\mathcal S(p))$ is the leading
  monomial of an element of $\ker(ev)$. Hence, in all steps of the
  algorithm, the set $L$ is formed by some leading monomials of
  elements of $\ker(ev)$. By the preceding remark, if $p$ is normal,
  then it is normal relative to $L$.

  By the initial definition of $G$, there is some $g\in G$ with
  $\sig(g)=\mathfrak e_i$, for any $\mathfrak e_i$. Of course, this still
  holds when adding an element to $G$ in the while-loop. We show: If there is
  some $g\in G$ with $\sig(g)=\mathfrak e_i$ then either $\mathfrak e_i$ is
  the leading monomial of an element of $\ker(ev)$, or $\interred(G)$ contains
  an element of signature $\mathfrak e_i$.
  Namely, when interreducing $G$,
  either $\poly\left(\NF(g,G)\right)=0$ and $g$ is removed from $G$, or $g$ is
  replaced by $\NF(g,G)$.  In the former case, $\sig(g)=\mathfrak e_i$ turns
  out to be the leading monomial of an element of $\ker(ev)$. In the latter
  case, it suffices to note that $\sig(g)=\sig\left(\NF(g,G)\right)=\mathfrak
  e_i$.
  Hence, $G$ satisfies the hypothesis of
  Theorem~\ref{thm:sigBuchberger}, which means that $G$ is a signed
  standard basis if and only if it satisfies the \FF criterion.

  If $G$ is not a signed standard basis, then the \FF criterion does
  not hold for $G$, and we will find a critical pair $p$ satisfying
  the hypothesis of the while-loop of Algorithm~\ref{alg:F5}, so that
  $G$ will be enlarged by $\NF(\mathcal S(p))\not=0$. In particular, the
  algorithm does not terminate yet.
  If $G$ is a signed standard basis, then the \FF criterion holds for
  $G$, and Algorithm~\ref{alg:F5} terminates, potentially after
  verifying that the S-polynomials of all remaining normal critical
  pairs relative to $L$ (which are finite in number) reduce to zero.

  Therefore, if Algorithm~\ref{alg:F5} terminates, then it returns an
  interreduced signed standard basis.  For the second statement, we note that
  each computation in the while-loop of Algorithm~\ref{alg:F5} is finite,
  since we use an admissible monomial ordering.
\end{proof}

To test the condition of the while-loop of Algorithm~\ref{alg:F5} is
certainly computationally complex. However, if $p$ is a critical pair,
it helps that $\sig(\mathcal S(p))$ can be read off of $p$ without
computing $\mathcal S(p)$. In practical implementations, one computes
a list of normal critical pairs and updates that list whenever $L$ or
$G$ change. We do not go into detail, but remark that the following
lemma helps to keep the list short.
\begin{lem}
  If $p$ and $p'$ are two critical pairs of $G$ such that $\sig\left(\mathcal
  S(p)\right) = \sig\left(\mathcal S(p')\right)$ and $p$ is considered in the
  while-loop of Algorithm~\ref{alg:F5}, then $p'$ will not be considered in
  the while-loop.  In particular, each critical pair will be considered at
  most once during the algorithm.
\end{lem}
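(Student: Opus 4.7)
Let $\tau := \sig(\mathcal S(p)) = \sig(\mathcal S(p'))$, and set $s := \NF(\mathcal S(p), G)$, so that $\sig(s) = \tau$ by construction. The plan is to examine the two branches of the conditional inside the while-loop and to show, in each case, that the entry condition for the while-loop permanently fails for any critical pair of signature $\tau$.

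In the zero-reduction branch $\poly(s) = 0$, the algorithm appends $\tau = \sig(s)$ to $L$. Writing $\tau = \mathfrak e_i \cdot c$ with $c \in \Mon(\mathcal P)$, the trivial factorisation $c = 1_v \cdot c \cdot 1_w$ (where $v$ and $w$ are the endpoints of the path $c$) together with $\mathfrak e_i \cdot c \in L$ shows that $\tau$ is not standard relative to $L$. Consequently, $\sig(\mathcal S(p')) = \tau$ is not standard relative to $L$, so $p'$ is not normal relative to $L$, and since $L$ only grows, this persists for the remainder of the algorithm.

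In the non-zero-reduction branch $\poly(s) \neq 0$, the algorithm replaces $G$ by $\interred(G \cup \{s\})$. Here my plan is to exhibit in the updated $G$ a witness pair $(g, c)$ with $g \in G$, $c$ a small cofactor of $\LM(g)$, $\sig(g) \cdot \lambda(c) = \tau$, and $\poly(g) \cdot c$ being $\tau$-irreducible with respect to $G$; such a pair directly violates the while-loop entry condition for $p'$. The natural candidate is $(s, 1_w)$, where $1_w$ is the trivial cofactor at the endpoint of $\LM(s)$, since the defining property of $\NF$ makes $\poly(s)$ irreducible at signature $\tau$ with respect to the pre-interreduction $G$. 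Should $s$ or a signature-preserving descendant obtained from Lemma~\ref{lem:sig_make_interreduced}(1) be removed by Lemma~\ref{lem:sig_make_interreduced}(2), the removal provides $g' \in G$ with small cofactor $c'$ satisfying $\LT(s) = \LT(g') \cdot c'$ and $\sig(g') \cdot \lambda(c') = \tau$; Lemma~\ref{lem:SmallCofactor} then gives $\LM(\poly(g') \cdot c') = \LM(\poly(s))$, so any $\tau$-reduction of $\poly(g') \cdot c'$ would also $\tau$-reduce $\poly(s)$, contradicting the irreducibility produced by $\NF$. The pair $(g', c')$ thus inherits the role of witness.

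The main technical obstacle is that interreduction and later while-loop iterations can both replace and remove elements of $G$, so the witness for $\tau$ must be tracked through a sequence of modifications. I would run an induction on these modifications: signature-preserving replacements via $\NF$ maintain a witness by the same irreducibility argument; Lemma~\ref{lem:sig_make_interreduced}(2) removals transfer the witness role via composition of small cofactors (as in the remark preceding Remark~\ref{rk:decompose}); and the introduction of a later reducer cannot invalidate $\tau$-irreducibility of the witness product, since any such reducer would itself qualify as an earlier witness of signature below $\tau$ and hence would have provided the disqualifying pair already. Carrying out this bookkeeping and verifying the preservation of $\tau$-irreducibility through all modifications is the main technical content of the proof.
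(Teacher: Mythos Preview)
Your two-case split (zero reduction adds $\tau$ to $L$; non-zero reduction supplies a witness $s$ of signature $\tau$) is exactly the paper's argument. The paper's proof is much terser: in the non-zero case it simply says that $s=\NF(\mathcal S(p),G)$ is added to $G$, is irreducible with respect to $G$, and has signature $\tau$, so the \FF clause of the while-loop condition fails for $p'$. The paper does not track the witness through $\interred$ or through later iterations at all; it treats those issues as routine.

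Your attempt to do that tracking is more scrupulous than the paper, and the part about interreduction is on the right track (if the signature-$\tau$ element survives it is irreducible because the result is interreduced; if it is removed by the second interreduction rule one inherits a pair $(g',c')$ with the same leading monomial and signature). However, your argument for stability under \emph{later} while-loop iterations does not work as written. You claim that a later reducer $s'$ invalidating $\tau$-irreducibility of the witness product ``would itself qualify as an earlier witness of signature below $\tau$''. But a reducer satisfies $\sig(s')\cdot\lambda(d)<\tau$, not $\sig(s')\cdot\lambda(d')=\tau$ for some cofactor $d'$; so $s'$ need not furnish any pair hitting the exact signature $\tau$, and hence does not by itself disqualify $p'$ from the while-loop condition. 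If you want to make this part rigorous you need a different invariant---for instance, arguing that after each iteration either $\tau\in L$ or the interreduced $G$ still contains an element of signature $\tau$ (which is then automatically irreducible)---rather than trying to show that a fixed product $\poly(g)\cdot c$ remains $\tau$-irreducible as $G$ grows.
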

\begin{rk}
  The fact expressed in this lemma is usually referred to as the
  \emph{rewritten criterion}~\cite{Faugere:F5}.
\end{rk}
\begin{proof}
  When $p$ is considered in the while-loop, two possibilities occur, depending
  on whether $\poly(\NF(\mathcal S(p),G))=0$ or not.

  If $\poly(\NF(\mathcal S(p),G))=0$ then $\sig(\mathcal S(p))$ will
  be added to $L$, and thus $p'$ will be discarded since it is not
  normal relative to the enlarged set $L$.

  If $\poly(\NF(\mathcal S(p),G))\not=0$, then we add to $G$ the signed
  element $\NF(\mathcal S(p),G)$, that is irreducible with respect to $G$ and
  has the same signature as $\mathcal S(p)$ and thus the same signature as
  $\mathcal S(p')$. Hence, $p'$ will be discarded in this case as well, by the
  \FF criterion.
\end{proof}

\section{Loewy layers of right modules over basic algebras}
\label{sec:Loewy}

In this section, let $\mathcal P$ be a path algebra over a field $K$, and let
$\mathcal A$ be a \emph{basic algebra}. Hence, $\mathcal A$ is
finite-dimensional over $K$, and if $\psi\co \mathcal P\to \mathcal A$ is the
quotient map, then $\ker(\psi)\subset \mathcal P^2$. In particular, any choice
of a monomial ordering on $\mathcal P$ induces an admissible monomial ordering
on $\mathcal A$.

Recall that a generating set of $\mathcal P$ as a $K$-algebra is given
by idempotents $1_v$ corresponding to the vertices of a quiver $Q$,
and elements $x_i$ of degree one corresponding to the arrows of
$Q$. Since the defining relations for $\mathcal A$ are at least quadratic,
$$\Rad(\mathcal A)=\{\psi(x)\co x\in\mathcal P, \deg(x)=1\}\cdot \mathcal A$$
and $\ker(\psi)$ does not contain elements of degree zero or one.

Apart from the additional assumption on the relations, we use the same
notations as in the preceding sections. Hence, we have a sub-module $M$ of a
free right-$\mathcal A$ module $F$ of rank $r$, generated by $\{\hat
g_1,...,\hat g_m\}$, and a free right-$\mathcal A$ module $E$ of rank $m$
whose generators $\mathfrak e_1,...,\mathfrak e_m$ are mapped to $\hat
g_1,...,\hat g_m$ by a homomorphism $ev\co E\to M$.  We have $\Rad^k(M) =
M\cdot \left(\Rad(\mathcal A)\right)^k$ for all $k=1,2,...$.

\begin{lem}
  Under the hypotheses of this section, Algorithm~\ref{alg:F5}
  terminates.
\end{lem}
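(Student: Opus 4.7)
The plan is to reduce termination to showing that the while loop of Algorithm~\ref{alg:F5} executes only finitely many times. This reduction is provided by the theorem stated immediately after Algorithm~\ref{alg:F5}, together with the fact that every individual step inside the loop (signed normal form computation, interreduction) is finite: the induced monomial ordering on $\mathcal{A}$ is admissible because $\mathcal{A}$ is finite dimensional and hence $\mathcal{B}_>(\mathcal{A})$ is finite.

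First I would observe that every critical pair $p$ picked up by the while loop is, by the hypothesis of the loop, normal relative to the current $L$. By definition of normality relative to $L$, the signature $\sig(\mathcal{S}(p)) = \mathfrak{e}_i \cdot c$ satisfies $c \in \stdmon_{\mathcal{A}}(\mathcal{P})$. Since $\mathcal{A}$ is a basic algebra, $\stdmon_{\mathcal{A}}(\mathcal{P})$ is in bijection with the finite basis $\mathcal{B}_>(\mathcal{A})$, so the set of signatures that can ever appear as $\sig(\mathcal{S}(p))$ during the algorithm has cardinality at most $m \cdot \dim_K(\mathcal{A})$.

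The key step is then to show that each such signature is ``consumed'' at most once over the course of the algorithm. After the iteration that processes $p$, let $\sigma = \sig(\mathcal{S}(p))$. If $\poly(\NF(\mathcal{S}(p),G)) = 0$, then $\sigma$ is added to $L$, so any future pair $p'$ with $\sig(\mathcal{S}(p')) = \sigma$ fails the standardness condition relative to the enlarged $L$ and is discarded. Otherwise the nonzero signed element $s$ of signature $\sigma$ is inserted into $G$; by construction $\poly(s)$ is $\sigma$-irreducible with respect to $G$, which witnesses the existence of $g \in G$ and small cofactor $c$ with $\sig(g) \cdot \lambda(c) = \sigma$ such that $\poly(g) \cdot c$ is $\sigma$-irreducible. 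Hence the while condition fails for any $p'$ with $\sig(\mathcal{S}(p')) = \sigma$. This is essentially the content of the rewritten criterion lemma, and it says precisely that the map from iterations to signatures is injective.

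The main subtlety I expect is to check that this ``covering'' invariant survives interreduction. When $\interred$ replaces some $g \in G$ by $g' = \NF(g, G \setminus \{g\})$ with $g' \neq 0$, the signature is preserved and $g'$ is still $\sig(g)$-irreducible, so coverage is maintained; when it removes $g$ because $\poly(\NF(g, G \setminus \{g\})) = 0$, the algorithm's explicit clause ``enlarging $L$ if a zero reduction occurs'' puts $\sig(g)$ into $L$, so coverage is transferred from $G$ to $L$. Once this is spelled out, combining the finiteness of the possible signatures with the injectivity of the iteration-to-signature map bounds the number of iterations by $m \cdot \dim_K(\mathcal{A})$, proving termination.
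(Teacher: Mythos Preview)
Your argument is correct and takes a genuinely different route from the paper. The paper does not invoke the rewritten criterion at all for termination; instead it treats the two kinds of iterations separately. When the reduction is zero, $G$ is unchanged while $L$ grows, so no new critical pairs appear and at least one becomes non-normal relative to the enlarged $L$; since $G$ is finite and each $\LM(g)$ has only finitely many minimal topplings, this can happen only finitely often before a nonzero reduction (or termination). When the reduction is nonzero, the paper argues that the set $\{\sig(g)\cdot\lambda(m)\colon g\in G,\ m\text{ a small cofactor of }\LM(g)\}$ strictly increases, and bounds it by the finitely many monomials of $E$ that are not leading monomials of elements of $\ker(ev)$. Your approach, by contrast, handles both cases uniformly via the rewritten-criterion lemma and yields the clean explicit bound $m\cdot\dim_K(\mathcal A)$ on the total number of iterations; your observation that normality relative to $L$ forces the signature to lie in $\{\mathfrak e_i\cdot c\colon c\in\stdmon_{\mathcal A}(\mathcal P)\}$ is exactly what makes this work. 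Your treatment of the interreduction subtlety is also more explicit than the paper's proof of the rewritten-criterion lemma itself; for full completeness you might add the second clause of signed interreduction (removal of $g$ when some $g'$ has $\LM(g')\cdot c=\LM(g)$ and $\sig(g')\cdot\lambda(c)=\sig(g)$), where the witness for $\sigma$ simply passes from $g$ to the pair $(g',c)$.
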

\begin{proof}
  In each repetition of the while-loop in Algorithm~\ref{alg:F5},
  there is only a finite number of critical pairs to be considered,
  simply since $G$ is finite and the number of minimal topplings of
  each monomial is finite.

  If $p$ is the critical pair considered in the while-loop, and
  $\poly\left(\NF(p,G)\right)=0$, then $G$ does not change. In particular, no
  additional critical pair emerges. On the contrary, some normal critical
  pairs may become non-normal relative relative to the now enlarged $L$.

  Otherwise, the set
  $$
  \{\sig(g)\cdot \lambda(m)\co g\in G, m\text{ a small cofactor of }\LM(g)\}
  $$
  strictly increases. Since $E$ only has finitely many monomials that
  are not leading monomials of elements of $\ker(ev)$, this can only
  happen finitely many times.
\end{proof}

\begin{defn}
  Let $\tau$ be a monomial of $E$. We define the
  \emph{$\tau$-layer} of $M$ as $$\mathcal L_\tau(M) =
  \{ev(\tilde f)\co \tilde f\in E, \LM(\tilde f)\le \tau\}.$$
\end{defn}

\begin{lem}\label{lem:Loewy_tau}
  Assume that the monomial ordering on $\mathcal E$ is a negative
  degree ordering. Let $\tau$ be the greatest monomial of $E$ such
  that $\deg(\tau)=d$, for some non-negative integer $d$. Then,
  $\Rad^d(M)=\mathcal L_\tau(M)$.
\end{lem}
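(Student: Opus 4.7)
The plan is to reduce the claimed identity to two elementary translations: one between the condition $\LM(\tilde f)\le \tau$ and a uniform lower bound on the degrees of the monomials appearing in $\tilde f$, and one between $\Rad^d(\mathcal A)$ and the image under $\psi$ of the degree-$\ge d$ part $\mathcal P_{\ge d}$ of $\mathcal P$. Once these two translations are in hand, the two inclusions $\mathcal L_\tau(M)\subseteq \Rad^d(M)$ and $\Rad^d(M)\subseteq \mathcal L_\tau(M)$ will follow by straightforward bookkeeping on generators.

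First I would unpack $\LM(\tilde f)\le \tau$. Because the ordering is a negative degree ordering, a monomial $\sigma$ of $E$ satisfies $\sigma\le\tau$ precisely when $\deg(\sigma)\ge d$: if $\deg(\sigma)<d$ then $\sigma>\tau$, if $\deg(\sigma)=d$ then $\sigma\le\tau$ by the maximality of $\tau$ in degree $d$, and if $\deg(\sigma)>d$ then $\sigma<\tau$. Since $\LM(\tilde f)$ is the greatest, and hence the minimum-degree, monomial of $\tilde f$, the inequality $\LM(\tilde f)\le\tau$ is equivalent to every monomial of $\tilde f$ having degree at least $d$. Next I would verify that $\Rad^d(\mathcal A)=\psi(\mathcal P_{\ge d})$: the inclusion $\psi(\mathcal P_{\ge d})\subseteq \Rad^d(\mathcal A)$ is immediate, because any path of length $\ge d$ factors as a product of $\ge d$ arrows and each arrow lies in $\Rad(\mathcal A)=\psi(\mathcal P_1)\cdot\mathcal A$; for the reverse inclusion, one expands a $d$-fold product of elements of $\psi(\mathcal P_1)\cdot\mathcal A$ and lifts each intermediate factor in $\mathcal A$ to an element of $\mathcal P$, so the whole product is visibly in $\psi(\mathcal P_{\ge d})$.

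With these two translations the two inclusions are routine. For $\mathcal L_\tau(M)\subseteq \Rad^d(M)$, any $\tilde f\in E$ with $\LM(\tilde f)\le\tau$ is a $K$-linear combination of monomials $\mathfrak e_i\cdot b$ with $\deg(b)\ge d$, so $ev(\tilde f)$ is a combination of elements $\hat g_i\cdot\psi(b)\in M\cdot\Rad^d(\mathcal A)=\Rad^d(M)$. Conversely, an element of $\Rad^d(M)$ is a sum of terms $m\cdot r$ with $m\in M$ and $r\in \Rad^d(\mathcal A)$; expanding $m=\sum_j \hat g_{i_j}\cdot a_j$ in the generators and using that each $a_j\cdot r$ lies in $\Rad^d(\mathcal A)=\psi(\mathcal P_{\ge d})$, one writes the term as $\sum_j \hat g_{i_j}\cdot \psi(p_j)=ev\bigl(\sum_j \mathfrak e_{i_j}\cdot p_j\bigr)$ with each $p_j\in \mathcal P_{\ge d}$, yielding a $\tilde f$ all of whose monomials have degree $\ge d$. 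The only substantive step, and the main obstacle, is the first translation, which is where both hypotheses enter: the \emph{negative degree} ordering makes the leading monomial record the lowest degree present in $\tilde f$, and the basic-algebra hypothesis is what gives $\Rad^d(\mathcal A)=\psi(\mathcal P_{\ge d})$; after those points the argument is purely formal.
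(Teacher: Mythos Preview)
Your argument is correct and follows essentially the same route as the paper: both proofs reduce the claim to the equivalence ``$\LM(\tilde f)\le\tau$ iff every monomial of $\tilde f$ has degree $\ge d$'' (via the negative degree ordering and the maximality of $\tau$) together with the identification $\Rad^d(M)=M\cdot\Rad^d(\mathcal A)$ coming from the basic-algebra hypothesis. The paper's proof simply asserts these two equivalences in a sentence each, whereas you have spelled out the intermediate step $\Rad^d(\mathcal A)=\psi(\mathcal P_{\ge d})$ and the two inclusions explicitly; the substance is the same.
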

\begin{proof}
  Since $\Rad^d(M) = M\cdot \left(\Rad(\mathcal A)\right)^d$ and
  $\Rad(\mathcal A)$ is generated by the monomials of $\mathcal A$ of
  degree one, it follows that $f\in M$ belongs to $\Rad^d(M)$ if and
  only if there is some $\tilde f\in E$ whose monomials are all of
  degree at least $d$, and $ev(\tilde f)=f$.

  Since we assume that the monomial ordering on $E$ is a negative
  degree ordering, the monomials of $\tilde f$ are all of degree at
  least $d$ if and only if $\LM(\tilde f)\le \tau$.
\end{proof}

\begin{lem}\label{lem:basis_Ltau}
  Let $\tau$ be a monomial of $E$. Let $G$ be an interreduced signed standard
  basis of $M$.
  Let $B_\tau(M,G)$ be the set of all $\poly(g)\cdot c$ with $g\in G$
  and a small cofactor $c$ of $\LM(g)$ such that $\sig(g)\cdot
  \lambda(c)\le \tau$ and $\poly(g)\cdot c$ is $\sig(g)\cdot
  \lambda(c)$-irreducible with respect to $G$.
  Then, $B_\tau(M,G)$ is a $K$-vector space basis of $\mathcal L_{\tau}(M)$.
\end{lem}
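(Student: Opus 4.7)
My proof plan is to establish linear independence via a leading-monomial argument, and spanning by induction on $\LM(f)$ combined with a minimality argument that furnishes the required reducer in $B_\tau(M,G)$.

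For linear independence, Lemma~\ref{lem:SmallCofactor} gives $\LM(\poly(g)\cdot c) = \LM(g)\cdot c$ for every element $\poly(g)\cdot c$ of $B_\tau(M,G)$, so I would show that distinct elements of $B_\tau(M,G)$ carry distinct leading monomials. Suppose $\poly(g_1)\cdot c_1,\poly(g_2)\cdot c_2\in B_\tau(M,G)$ satisfy $\LM(g_1)\cdot c_1 = \LM(g_2)\cdot c_2$. Each $\poly(g_i)\cdot c_i$ is $\sig(g_i)\cdot \lambda(c_i)$-irreducible with respect to $G$ by definition, so Lemma~\ref{lem:LMdistinct} forces $g_1 = g_2$, and uniqueness of path factorisations in the quiver then gives $\lambda(c_1) = \lambda(c_2)$, whence $c_1 = c_2$. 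Consequently, any non-trivial $K$-linear combination of distinct elements of $B_\tau(M,G)$ has a unique term at its largest leading monomial and is non-zero.

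For spanning I would argue by induction on $\LM(f)$, which is well-founded since $\mathcal A$ is finite-dimensional. The base $f=0$ is the empty combination. The inductive step rests on the sub-claim: every non-zero $f\in\mathcal L_\tau(M)$ admits a pair $(g,c)$ with $g\in G$, $c$ a small cofactor of $\LM(g)$, $\LM(g)\cdot c = \LM(f)$, $\sig(g)\cdot \lambda(c)\le\tau$, and $\poly(g)\cdot c$ being $\sig(g)\cdot \lambda(c)$-irreducible with respect to $G$. Granted this, set $\alpha = \LC(f)/\LC(\poly(g)\cdot c)$; then $f' = f-\alpha\,\poly(g)\cdot c$ satisfies $\LM(f')<\LM(f)$, and if $\tilde f,\tilde g\in E$ lift $f,\poly(g)$ with $\LM(\tilde f)\le\tau$ and $\LM(\tilde g) = \sig(g)$, the element $\tilde f-\alpha\,\tilde g\cdot \lambda(c)\in E$ evaluates to $f'$ and has leading monomial $\le\tau$. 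Hence $f'\in \mathcal L_\tau(M)$, and by induction $f$ is a $K$-linear combination of elements of $B_\tau(M,G)$.

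To prove the sub-claim, I would consider
$$S = \{(g,c):g\in G,\ c\text{ a small cofactor of }\LM(g),\ \LM(g)\cdot c = \LM(f),\ \sig(g)\cdot \lambda(c)\le\tau\}$$
and pick an element with minimal signature $\sig(g)\cdot \lambda(c)$. Non-emptiness of $S$ follows by choosing a signed element $(h,s)$ of $M$ with $\LM(h) = \LM(f)$ and $s\le\tau$ of minimal signature: such minimality precludes $s$ from being a leading monomial of $\ker(ev)$ (else $h$ could be re-lifted with strictly smaller signature) and forces $h$ to be $s$-irreducible with respect to $M\setminus\{0\}$ (else a reducer would produce a signed element with leading monomial $\LM(f)$ and strictly smaller signature $\le\tau$); the signed standard basis property then supplies $(g,c)\in S$ with $\sig(g)\cdot \lambda(c) = s$. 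For a minimal-signature $(g,c)\in S$, any witness of $\sig(g)\cdot \lambda(c)$-reducibility of $\poly(g)\cdot c$ in $G$ would yield another element of $S$ with strictly smaller signature, so $\poly(g)\cdot c$ is automatically $\sig(g)\cdot \lambda(c)$-irreducible with respect to $G$. The main obstacle I anticipate is organising this minimality argument: one must carefully distinguish the signature of a particular signed representation of $f$ from the minimal signature achievable among all signed elements with leading monomial $\LM(f)$, and invoke the signed standard basis property against the latter.
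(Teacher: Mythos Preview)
Your proof is correct and follows essentially the same route as the paper: linear independence via Lemma~\ref{lem:LMdistinct} (distinct leading monomials), and spanning by finding for each $f\in\mathcal L_\tau(M)$ a pair $(g,c)$ with $\LM(g)\cdot c=\LM(f)$, $\sig(g)\cdot\lambda(c)\le\tau$, and minimal signature so that $\poly(g)\cdot c$ is irreducible. The paper is terser---it asserts directly that any signed element $(f,\sigma)$ is weakly $\sigma$-reducible with respect to $G$ (implicitly combining the definition with the earlier lemma on $s$-reducibility) and leaves the induction on $\LM(f)$ and the verification $f'\in\mathcal L_\tau(M)$ implicit---whereas you spell these steps out, including the detour through a minimal-signature $(h,s)$ with $\LM(h)=\LM(f)$ to invoke the signed-standard-basis definition directly.
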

\begin{proof}
  If $f\in \mathcal L_\tau(M)$, then there is some monomial $\sigma\le\tau$ of
  $E$ such that $(f,\sigma)$ is a signed element of $M$. Since $G$ is a signed
  standard basis of $M$, $f$ is weakly $\sigma$-reducible with respect to
  $G$. Hence, we find $g\in G$ and a small cofactor $c$ of $\LM(g)$ such that
  $\sig(g)\cdot \lambda(c)\le \sigma\le \tau$ and $\LM(g)\cdot c=\LM(f)$.
  When we choose $\sig(g)\cdot \lambda(c)$ minimal, then $\poly(g)\cdot c$ is
  $\sig(g)\cdot \lambda(c)$-irreducible with respect to $G$.
  Hence, $B_d(M,G)$ generates $L_\tau(M)$ as a $K$-vector space.

  The leading monomials of the elements of $B_\tau(M,G)$ are pairwise
  distinct, by Lemma~\ref{lem:LMdistinct}, since $G$ is interreduced. Hence,
  $B_\tau(M,G)$ is $K$-linearly independent.
\end{proof}

\begin{rk}
  If $\tau,\tau'$ are monomials of $E$ and $\tau'\le \tau$, then
  $B_{\tau'}(M,G)\subseteq B_\tau(M,G)$, for any interreduced signed standard
  basis $G$ of $M$.
\end{rk}

Recall that the $d$-th Loewy layer of $M$ is $\Rad^{d-1}(M)/\Rad^d(M)$, for
$d=1,2,...$. For $f\in \Rad^{d-1}(M)$, we denote the equivalence class
of $f$ in $\Rad^{d-1}(M)/\Rad^d(M)$ by $[f]$.

\begin{thm}\label{thm:layer}
  Suppose that the monomial ordering on $E$ is a negative degree ordering. Let
  $d$ be some positive integer, let $\tau$ be the greatest monomial of $E$
  such that $\deg(\tau)=d-1$, and let $\tau'$ be the greatest monomial of $E$
  such that $\deg(\tau')=d$. Let $G$ be an interreduced signed standard basis
  of $M$.  Then 
  $$\left\{[f]\co f\in B_\tau(M,G)\setminus B_{\tau'}(M,G)\right\}$$
  is a $K$-vector space basis of the $d$-th Loewy layer of $M$.
\end{thm}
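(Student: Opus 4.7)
The plan is to assemble this theorem from the two preceding lemmas via a standard linear algebra observation, with no new technical machinery required. First, I would apply Lemma~\ref{lem:Loewy_tau} twice: with $\tau$ the greatest monomial of degree $d-1$ it yields $\Rad^{d-1}(M)=\mathcal L_\tau(M)$, and with $\tau'$ the greatest monomial of degree $d$ it yields $\Rad^d(M)=\mathcal L_{\tau'}(M)$. So the $d$-th Loewy layer is precisely $\mathcal L_\tau(M)/\mathcal L_{\tau'}(M)$.

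Next, I invoke Lemma~\ref{lem:basis_Ltau} twice with the given interreduced signed standard basis $G$: $B_\tau(M,G)$ is a $K$-basis of $\mathcal L_\tau(M)=\Rad^{d-1}(M)$ and $B_{\tau'}(M,G)$ is a $K$-basis of $\mathcal L_{\tau'}(M)=\Rad^d(M)$. Here I use that we are working in a \emph{negative} degree ordering, so any monomial of degree $d$ is strictly smaller than any monomial of degree $d-1$; in particular $\tau'<\tau$, and the remark immediately preceding the theorem then gives the crucial containment $B_{\tau'}(M,G)\subseteq B_\tau(M,G)$.

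At this point the theorem reduces to a purely linear-algebraic fact: if $W\subseteq V$ are $K$-vector spaces with bases $B'\subseteq B$ respectively, then $\{[b]\co b\in B\setminus B'\}$ is a basis of $V/W$. This I would prove directly. For spanning, any $[v]\in V/W$ is represented by $v=\sum_{b\in B}\alpha_b b$, and the $B'$-terms vanish modulo $W$. For linear independence, a relation $\sum_{b\in B\setminus B'}\alpha_b [b]=0$ lifts to $\sum_{b\in B\setminus B'}\alpha_b b\in W$; expanding the right hand side in the basis $B'$ and using $K$-linear independence of $B$ forces every $\alpha_b=0$. Taking $V=\Rad^{d-1}(M)$, $W=\Rad^d(M)$, $B=B_\tau(M,G)$ and $B'=B_{\tau'}(M,G)$ gives the claim.

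The main obstacle is not any single technical step, but rather the bookkeeping around the negative degree ordering: one must verify that ``greatest monomial of degree $k$'' combines correctly with $\le$ so that $\tau'\le\tau$ and so that Lemma~\ref{lem:Loewy_tau} really does identify $\mathcal L_\tau(M)$ with $\Rad^{d-1}(M)$. Once these sign conventions are pinned down, the theorem is a direct corollary of Lemmas~\ref{lem:Loewy_tau} and~\ref{lem:basis_Ltau}.
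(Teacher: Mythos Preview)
Your proposal is correct and follows essentially the same approach as the paper: invoke Lemmas~\ref{lem:Loewy_tau} and~\ref{lem:basis_Ltau} to identify $B_\tau(M,G)$ and its subset $B_{\tau'}(M,G)$ as $K$-bases of $\Rad^{d-1}(M)$ and $\Rad^d(M)$ respectively, then pass to the quotient. The paper merely asserts that ``the claim directly follows'' from this, while you spell out the elementary linear-algebra step and the sign-convention check explicitly.
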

\begin{proof}
  By the choice of $\tau$ and $\tau'$ and by Lemmas~\ref{lem:Loewy_tau}
  and~\ref{lem:basis_Ltau}, $B_\tau(M,G)$ is a basis of $\Rad^{d-1}(M)$, and
  its subset $B_{\tau'}(M,G)$ is a basis of $\Rad^{d}(M)$. The claim directly
  follows.
\end{proof}

\begin{cor}
  If $G$ is an interreduced signed standard basis of $M$. The elements of $G$
  whose signatures are of degree zero form a minimal generating set of $M$.
\end{cor}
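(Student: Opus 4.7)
The plan is to deduce the corollary from Theorem~\ref{thm:layer} in the case $d=1$ combined with Nakayama's lemma. Write $G_0 = \{g \in G : \deg(\sig(g)) = 0\}$, let $\tau$ be the greatest monomial of $E$ of degree $0$ and $\tau'$ the greatest of degree $1$; by Theorem~\ref{thm:layer} the classes $[f]$ for $f \in B_\tau(M,G) \setminus B_{\tau'}(M,G)$ form a $K$-basis of $M/\Rad(M)$.

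The crucial step is identifying this basis with $G_0$. An element of $B_\tau(M,G) \setminus B_{\tau'}(M,G)$ has the form $\poly(g) \cdot c$ with $\deg(\sig(g) \cdot \lambda(c)) = 0$, which forces $g \in G_0$ and $c = 1_v$ a vertex idempotent. The small-cofactor condition singles out the unique $v$ equal to the endpoint $v(g)$ of $\LM(g)$, since any other $v$ gives the toppling $\LM(g) \cdot 1_v = 0$. Conversely, for each $g \in G_0$ the endpoint idempotent is a small cofactor of $\LM(g)$, and interreducedness of $G$ makes $\poly(g) \cdot 1_{v(g)}$ be $\sig(g)$-irreducible with respect to $G$. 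Under the signature setup each $\hat g_i$ is concentrated at the endpoint vertex associated to $\mathfrak{e}_i$, and this endpoint concentration is preserved by both the reductions and the S-polynomial constructions of Algorithm~\ref{alg:F5}; hence $\poly(g) \cdot 1_{v(g)} = \poly(g)$ for every $g \in G_0$, so the basis takes the form $\{[\poly(g)] : g \in G_0\}$, with distinct elements by Lemma~\ref{lem:LMdistinct}. This identification is the main obstacle.

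Generation then follows by Nakayama: each basis vector $[\poly(g)]$ lies in the image of $\langle G_0 \rangle$ in $M/\Rad(M)$, so $\langle G_0 \rangle + \Rad(M) = M$; iterating with $\Rad(M) = M \cdot \Rad(\mathcal{A})$ and using the nilpotency of $\Rad(\mathcal{A})$ (a consequence of $\mathcal{A}$ being basic) forces $\langle G_0 \rangle = M$.

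Minimality is then immediate. For any $g_0 \in G_0$, the image of $\langle G_0 \setminus \{g_0\} \rangle$ in $M/\Rad(M)$ equals the $K$-span of $\{[\poly(g) \cdot 1_v] : g \in G_0 \setminus \{g_0\},\, v\}$; using endpoint concentration ($[\poly(g) \cdot 1_v] = 0$ for $v \neq v(g)$) this reduces to $\{[\poly(g)] : g \in G_0 \setminus \{g_0\}\}$, a proper subspace missing the basis vector $[\poly(g_0)]$, which is a contradiction.
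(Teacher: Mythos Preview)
Your proof is correct and follows the same route as the paper: apply Theorem~\ref{thm:layer} with $d=1$ and read off a minimal generating set from the resulting basis of the head $M/\Rad(M)$. The paper's own argument is a two-line sketch that simply asserts this identification; you have supplied the details it omits --- the explicit matching of $B_\tau(M,G)\setminus B_{\tau'}(M,G)$ with $\{\poly(g):g\in G_0\}$, the Nakayama step for generation, and an explicit minimality check.

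One comment on the endpoint-concentration step: this is really an implicit hypothesis of the paper's signature framework rather than something you can derive from Algorithm~\ref{alg:F5}. For the initial pair $(\hat g_i,\mathfrak e_i)$ to be a well-formed signed element with $\mathfrak e_i$ a single monomial of $E$, one already needs $\hat g_i=\hat g_i\cdot 1_{v_i}$ for the appropriate vertex $v_i$. Given that, your observation that concentration is preserved by signed reductions and by the two types of S-polynomial is correct and completes the identification. The paper's brief proof tacitly relies on the very same point, so you are not introducing anything extraneous --- you are just making explicit what the paper leaves unsaid.
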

\begin{proof}
  By Theorem~\ref{thm:layer}, the elements of $G$ with signatures of degree
  zero yield a basis of the first Loewy layer of $M$, \emph{i.e.}, of the head
  of $M$. Hence, they form a minimal generating set of $M$.
\end{proof}

\bibliographystyle{alpha}
\bibliography{../references}

\end{document}